\DeclareMathAlphabet{\pazocal}{OMS}{zplm}{m}{n}
\newcommand{\Lim}{\displaystyle\lim}
\newtheorem{lemma}{Lemma}[section]
\newtheorem{corollary}[lemma]{Corollary}
\newtheorem{theorem}[lemma]{Theorem}
\newtheorem{proposition}[lemma]{Proposition}
\newtheorem{remark}[lemma]{Remark}
\newtheorem{definition}[lemma]{Definition}
\newtheorem{notation}[lemma]{Notation}
\newcolumntype{L}[1]{>{\raggedright\let\newline\\\arraybackslash\hspace{0pt}}m{#1}}
\newcolumntype{C}[1]{>{\centering\let\newline\\\arraybackslash\hspace{0pt}}m{#1}}
\newcolumntype{R}[1]{>{\raggedleft\let\newline\\\arraybackslash\hspace{0pt}}m{#1}}
\newcommand{\bK}{\mathbb{K}}
\def\l{\lambda}
\def\a{\alpha}
\def\b{\beta}
\def\g{\gamma}
\def\o{\omega}
\def\m{\mu}
\def\GL{\mathop{\hbox{\rm GL}}}
\def\B{\mathcal{B} }
\def\remove#1{}
\newcommand{\Supp}{{\rm supp}}
\newcommand{\G}{{\mathcal{G}}}
\newcommand{\N}{{\mathbb{N}}}
\newcommand{\Z}{{\mathbb{Z}}}
\newcommand{\K}{{\mathbb{K}}}
\newcommand{\KN}{{\mathbb{K}^{\times}}}
\newcommand{\Q}{{\mathbb{Q}}}
\newcommand{\R}{{\mathbb{R}}}
\newcommand{\dos}{\mathbf{II}}
\newcommand{\tres}{\mathbf{III}}
\def\ch{\mathop{\hbox{\rm char}}}
\newcommand{\D}{\mathfrak{Diag}}
\title{On simple evolution algebras of dimension two and three. Constructing simple and semisimple evolution algebras}
\author[Y. Cabrera]{Yolanda Cabrera Casado}
\author[D. Mart\'{\i}n]{Dolores Mart\'{\i}n Barquero}
\author[C. Mart\'{\i}n]{C\'andido Mart\'{\i}n Gonz\'alez}
\author[A. Tocino]{Alicia Tocino}
\address{Departamento de Matem\'atica Aplicada, E.T.S. Ingenier\'\i a Inform\'atica, Universidad de M\'alaga, Campus de Teatinos s/n. 29071 M\'alaga.   Spain. }
\email{yolandacc@uma.es}
\address{Departamento de Matem\'atica Aplicada, Escuela de Ingenier\'\i as Industriales, Universidad de M\'alaga, Campus de Teatinos s/n. 29071 M\'alaga.   Spain.}
\email{dmartin@uma.es}
\address{Departamento de \'Algebra Geometr\'{\i}a y Topolog\'{\i}a, Fa\-cultad de Ciencias, Universidad de M\'alaga, Campus de Teatinos s/n. 29071 M\'alaga. Spain.} \email{candido\_m@uma.es}
\address{Departamento de Matem\'atica Aplicada, E.T.S. Ingenier\'\i a Inform\'atica, Universidad de M\'alaga, Campus de Teatinos s/n. 29071 M\'alaga.   Spain. }
 \email{alicia.tocino@uma.es}
\subjclass[2020] {17A60, 17D92, 05C25.} 
\keywords{Evolution algebra, simple algebra, directed graph, strongly connected, tensor product, moduli set.}
\thanks{ The  authors are supported by the Spanish Ministerio de Ciencia e Innovaci\'on through project  PID2019-104236GB-I00 and  by the Junta de Andaluc\'{\i}a  through projects  FQM-336 and UMA18-FEDERJA-119,  all of them with FEDER funds. 
}
\begin{document}

\begin{abstract}
This work classifies  three-dimensional simple evolution algebras over arbitrary fields. For this purpose, we use tools such as the associated directed graph, the moduli set, inductive limit group, Zariski topology and the dimension of the diagonal subspace. Explicitly, in the three-dimensional case we construct some models $_i\tres_{\l_1,\ldots,\l_n}^{p,q}$ of such algebras with $1\le i\le 4$, $\l_i\in\K^\times$, $p,q\in\N$, such that any algebra is isomorphic to
one (and only one) of the given in the models and we further investigate the isomorphic question within each one. Moreover, we show how to construct simple evolution algebras of higher order from known simple evolution algebras of smaller size.
\end{abstract}

\maketitle

\section{Introduction}
The classification of three-dimensional evolution algebras over fields (under some hypothesis on the ground field) is achieved in \cite{CSV2}. In particular, the classification of evolution algebras of dimension three over the real field is obtained in \cite{BMV} and over the complex field in the paper \cite{IM}. For more information on how advances in the classification of this type of algebras have emerged, one can see \cite[Section 4.7]{ceballos}. The number of cases arising in the classification is so high that it is worth re-thinking the strategy and  ample the classification to general fields. We have divided the job into two tasks. The first one is the classification of non simple algebras, which is done in  \cite{CCGMM}. Having completed the classification of non simple evolution algebras of dimension three over arbitrary fields in the aforementioned  reference, we aboard in this paper the corresponding classification in the case of simple algebras.
The novelty in our study is the use of some tools like the diagonal subspace and the moduli sets, which systematize the organization of different classes of simple algebras into families which are parametrized by suitable sets, usually orbit sets under the action of appropriate groups.  Roughly speaking, a moduli set for a class of algebras $C$ (over the same ground field) is a set $S$ which parametrizes the isomorphic classes of elements in $C$. So, we can say that the isomorphic classes of elements of $C$ are in one-to-one correspondence with the elements of the moduli set. If we represent by $\hbox{iso}(C)$ the set whose elements are isomorphic classes of algebras of $C$, then there will be a bijection $\omega\colon \hbox{iso}(C)\to S$
in such a way that from each 
$s\in S$ we can construct the multiplication table of any $A\in\omega^{-1}(s)$. The moduli set may result from having some extra algebraic or geometric structure. For instance, there is a family of simple three-dimensional algebras $A_\l$ depending on a nonzero $\l\in\K^\times $ whose isomorphism condition is
$A_\l\cong A_\m$ if and only if $\m=\l r^7$ for some $r\in\K^\times$. So, a moduli set for this class of algebras is the quotient group $G=\K^\times/(\K^\times)^{[7]}$ where
$(\K^\times)^{[7]}:=\{r^7\colon r\in \K^\times\}$. In this case, the moduli set is a group and the isomorphism classes of such algebras are in one-to-one correspondence with $G$.
To give an example of a moduli set with a geometric flavour,
recall that a bundle in the category of sets is a surjective map $\pi\colon E\to B$ where $B$ is the base set and $E$ the total set. The sets $\pi^{-1}(b)$ are called the fibers and, of course, we have $E=\sqcup_{b\in B} \pi^{-1}(b)$.
There is another class $C'$ of three-dimensional simple algebras $B_{a,b}$ depending on $a,b\in\K^\times$ such that
$B_{a,b}\cong B_{x,y}$ if and only if there is some
$t\in\K^\times$ with $x=t^3 a$ and $y=t^7 b$. Then the parametrized curve $c_{a,b}:=\footnotesize\begin{cases}x=t^3 a\\ y=t^7 b\end{cases}$
contains all the points $(x,y)$ such that $A_{a,b}\cong A_{x,y}$. Furthermore, the set $(\K^\times\times\K^\times)/\Delta_{3,7}$, whose elements are the different curves $c_{a,b}$, is a moduli set for the class $C'$. In this case, there is a bundle (in the category of sets) $C'\to (\K^\times\times\K^\times)/\Delta_{3,7}$ such that $B_{a,b}\mapsto c_{a,b}$. This moduli set set is the base set of the bundle whose total set is $C'$ and the fibers are the isomorphic classes of $C$.

Thus, this work aims to classify simple three-dimensional evolution algebras over an arbitrary field of scalars, giving explicit constructions of moduli set for the different classes of algebras. To do the classification over  arbitrary fields, it has become necessary to use the Fröbenius normal form in just one case.

The paper is structured in the following way. In the section of preliminaries, we recall all the definitions and properties that will be used in the task of  classification. In Section \ref{simple}, we introduce the concept of diagonal subspace for $\K$-evolution algebras with a unique natural basis. It will be used to classify evolution algebras in terms of the dimension of their diagonal subspace.
In the case of two-dimensional evolution algebras, we  have three types of simple evolution algebras, that are collected in Theorem \ref{teorema1}. In the case of three-dimensional simple evolution algebras, we have twenty-seven types, which are reflected in the main Theorem \ref{teorema2}.  In the last section, we obtain new simple evolution algebras of finite dimension as the tensor product of two simple evolution algebras (see Theorem \ref{cor:strong}) in terms of the categorical product of their associated directed graphs using \cite{McANDREW}. Another way of constructing new simple evolution algebras is described in Remark \ref{nuevas}. Moreover, if $A$ is a finite evolution algebra with an ideal $I$ of dimension $1$, we prove that $A/I$ is not simple (see Theorem \ref{cociente}).

\section{Preliminaries}
A \emph{directed graph} is a $4$-tuple $E=(E^0, E^1, r_E, s_E)$ 
consisting of two disjoint sets $E^0$, $E^1$ and two maps
$r_E, s_E: E^1 \to E^0$. The elements of $E^0$ are called \emph{vertices} and the elements of 
$E^1$ are called \emph{edges} of $E$.  Further, for $e\in E^1$, $r_E(e)$ and $s_E(e)$ are 
 the \emph{range} and the \emph{source} of $e$, respectively. 
If there is no confusion with respect to the graph we are considering, we simply write $r(e)$ and $s(e)$. A \emph{path} $\mu$ of length $m$ is a finite chain of edges $\mu=f_1\ldots f_m$ such that $r(f_i)=s(f_{i+1})$ for $i=1,\ldots,m-1$. We denote by $s(\mu):=s(f_1)$ the source of $\mu$ and $r(\mu):=r(f_m)$ the range of $\mu$. If $r(\mu)=s(\mu)$,
 then $\mu$ is called a \emph{closed path}. If $E$ is a directed graph and $S\subset E^0$, then denote by $T(S)$ the tree of $S$ where $$T(S)=\{v\in E^0 \colon \text{ exist }\lambda \in \text{Path}(E) \text{ and } u \in S \text{ with } s(\lambda)=u, r(\lambda)=v\}.$$
In this paper, when we say graph we mean directed graph. An \emph{evolution algebra} over a field $\bK$ is a $\bK$-algebra $A$ which has a basis $\B=\{e_i\}_{i\in \Lambda}$ such that $e_ie_j=0$ for every $i, j \in \Lambda$ with $i\neq j$. Such a basis is called a \emph{natural basis}.
From now on, all the evolution algebras we will consider will be finite-dimensional, and  $\Lambda$ will denote a finite set $\{1, \dots, n\}$.

Let $A$ be an evolution algebra with a natural basis $\B=\{e_i\}_{i\in \Lambda}$.
Denote by $M_\B=(\omega_{ij})$ the \emph{structure matrix} of $A$ relative to $\B$, i.e., $e_i^2 = \sum_{j\in \Lambda} \omega_{ji}e_j$. In the particular case that $\det(M_\B) \neq 0$, we say that the evolution algebra is \emph{perfect}. Let $u=\sum_{i\in \Lambda}\alpha_ie_i$ be an element of $A$. Recall that 
he \emph{support of} $u$ \emph{relative to} $B$, denoted $\Supp_B(u)$, is defined as the set $\Supp_B(u)=\{i\in \Lambda\ \vert \  \alpha_i \neq 0\}$.

We recall the definition of the directed graph associated to an evolution algebra $A$ relative to a natural basis $\B=\{e_1,\ldots,e_n\}$. We draw an edge from the vertex labelled $i$ to the one labelled $j$ if the $j$-th coordinate of $e_i^2$ relative to $\B$ is nonzero. An interesting property is that 
\begin{equation}\label{first}
I:=\bigoplus_{i\in T(S)}\K e_i\triangleleft A\end{equation}
for any $S\subset E^0$. To prove this, we only need to check that $e_i^2\in I$ whenever $i\in T(S)$: but $e_i^2=\sum_j\o_{ji} e_j$ and if $\o_{ji}\ne 0$ we have 
$j\in T(i)\subset T(S)$.
One of the ideas in this work is to construct three-dimensional simple evolution algebras from algebraic constructions starting from two-dimensional evolution algebras.
We will need several algebraic tools.

\remove{The very first of all is the tensor product of algebras. If $A$ and $B$ are finite-dimensional vector spaces over a field $\K$ and $\{e_i\}_{i=1}^n$, $\{u_j\}_{j=1}^m$ are basis of $A$ and $B$ respectively, then $\{e_i\otimes u_j\}_{i=1,j=1}^{n,m}$ is a basis of $A\otimes B$. Assume furthermore that $A$ and $B$ are algebras, with structure constants $\omega_{ij}^k$ and $\sigma_{pq}^n$. 
Thus $e_ie_j=\sum_k\omega_{ij}^k e_k$ and $u_pu_q=\sum_r\sigma_{pq}^r u_r$. Then we have 
$$(e_i\otimes u_p)(e_j\otimes u_q)=\sum_{k,r}\omega_{ij}^k\sigma_{pq}^r e_k\otimes u_r.$$
If $A$ and $B$ are evolution algebras, then $\omega_{ij}^k=\delta_{ij}\omega_{ii}^k$ and 
$\sigma_{pq}^r=\delta_{pq}\omega_{pp}^r$. Denoting $\omega_{ii}^k:=\omega_i^k$ and $\sigma_{pp}^r:=\sigma_p^r$ we may write 
$$(e_i\otimes u_p)(e_j\otimes u_q)=\delta_{ij}\delta_{pq}\sum_{k,r}\omega_{i}^k\sigma_{p}^r e_k\otimes u_r,$$ which proves that $A\otimes B$ is an evolution algebra and its structure matrix relative to the basis $\{e_i\otimes u_p\}$ is $(\o_i^j)_{i,j}\otimes(\sigma_p^r)_{p,r}$. For instance, if $A$ has structure matrix
$\tiny\begin{pmatrix}\o_1^1 & \o_1^2\cr\o_2^1&\o_2^2\end{pmatrix}$ and $B$ 
$\tiny\begin{pmatrix}\sigma_1^1 & \sigma_1^2\cr\sigma_2^1&\sigma_2^2\end{pmatrix}$ then $A\otimes B$ has structure matrix 
$$\begin{pmatrix}\o_1^1 & \o_1^2\cr\o_2^1&\o_2^2\end{pmatrix}\otimes \begin{pmatrix}\sigma_1^1 & \sigma_1^2\cr\sigma_2^1&\sigma_2^2\end{pmatrix}=
\begin{pmatrix}\o_1^1\sigma_1^1 & \o_1^1\sigma_1^2 & \o_1^2\sigma_1^1 & \o_1^2\sigma_1^2 \cr
\o_1^1\sigma_2^1 & \o_1^1\sigma_2^2 & \o_1^2\sigma_2^1 & \o_1^2\sigma_2^2 \cr
\o_2^1\sigma_1^1 & \o_2^1\sigma_1^2 & \o_2^2\sigma_1^1 & \o_2^2\sigma_1^2 \cr
\o_2^1\sigma_2^1 & \o_2^1\sigma_2^2 & \o_2^2\sigma_2^1 & \o_2^2\sigma_2^2 \cr
\end{pmatrix}.$$
If $M$ is an $n\times m$ matrix and $N$ is $k\times l$ then $M\otimes N$ is 
a $nk \times ml $ matrix. So if $M$ and $N$ are square matrices, then $M\otimes N$ is also a square matrix and it is well known that $\det(M\otimes N)=\det(M)^m\det(N)^n$ where $M$ is $n\times n$ and $N$ is $m\times m$. In particular, the tensor product of nonsingular matrices is again a nonsingular matrix (from this we deduce that the tensor product of perfect evolution algebras is a perfect evolution algebra). 
It is also remarkable that the directed graph of the tensor product of evolution algebras is the product of the graphs defined in the following way: if $E=(E^0,E^1,r_E,s_E)$ and
$F=(F^0,F^1,r_F,s_F)$ then $E\times F:=(E^0\times F^0, E^1\times F^1,r,s)$ where 
$r(f,g)=(r_E(f),r_F(g))$  and similarly 
$s(f,g)=(s_E(f),s_F(g))$ for $(f,g)\in E^1\times F^1$.
}

It is known that for perfect evolution algebras, the number of edges in its directed graph does not depend on the natural basis chosen to get the graph. Also, the number of loops (more precisely: edges whose source and range agree) is an invariant (see \cite[Corollary 4.5]{EL1}). Moreover, recall that non-isomorphic graphs correspond to non-isomorphic evolution algebras for perfect evolution algebras.

\begin{definition}\rm
Let $A$ be a perfect evolution algebra with structure matrix $(\o_{ij})_{i,j=1}^n$.   The cardinal of the set
$\{i \ \colon \o_{ii}\ne 0\}$, which does not depend on the chosen natural basis, will be called the {\it  $l$-number of} $A$ and denoted $l(A)$. The cardinal of $\{(i,j)\colon \o_{ij}\ne 0\}$, which is another invariant, will be called the 
{\it $e$-number of} $A$ (denoted $e(A)$).
\end{definition}

Given $n\in\N$ with $n\ge 1$, and a field $\K$, we will use the notation $G_n(\K)$ for the multiplicative group $G_n(\K)=\K^\times/(\K^\times)^{[n]}$ where $(\K^\times)^{[n]}:=\{x^n\colon x\in \K^\times\}$.  
If the allusion to the field $\K$ is not necessary, we shall use $G_n$ instead of $G_n(\K)$. If $n\ge 2$, then 
there is a group action of $\Z_q$ on $G_n$ which we can describe in the following way: consider $\Z_q$ with multiplicative notation, that is $\Z_q=\{1,\pi,\ldots,\pi^{q-1}\}$ where $\pi^q=1$. Then define $\Z_q\times G_n\to G_n$ by $\pi^m \bar k=\bar k^{j^m}$ where $j$ is a fixed integer satisfying $j \neq 1$ and $j^q\equiv 1 (\hbox{mod } n)$. To prove that the above map is indeed a group action, consider $$\pi^m(\pi^l \bar k)=\pi
^m(\overline{k}^{j^l})=\overline{k}^{j^{l+m}}=\pi^{m+l}\bar k,\quad k\in\K^\times.$$

\begin{definition}\label{moduli}\rm
Let $\G$ be a group and $X$ an $\G$-set. That is, $X$ is a set, and there is a group action $\G\times X\to X$, $(m,x)\mapsto mx$. Then we say that the couple $(\G,X)$ is a {\it moduli set} for a class $C$ of algebras over the field $\K$ if there is a one-to-one correspondence between the isomorphic classes of algebras in $C$ and the set of orbits $X/\G$. Precisely, the \emph{set of orbits of a moduli set} $(\G,X)$ is defined as the set of orbits $X/\G$.

If $(\G,X)$ is a $\G$-set and $(\G',X')$ a $\G'$-set, we can define a homomorphism $f\colon (\G,X)\to (\G',X')$ as a couple $f=(f_1,f_2)$ where $f_1\colon \G\to \G'$ is a group homomorphism and $f_2\colon X\to X'$ is a map satisfying $f_2(gx)=f_1(g)f_2(x)$ for any $g\in\G$ and $x\in X$. This allows a natural notion of isomorphism. If $(\G,X)$ is a moduli set for a class of algebras $C$ and there is an isomorphism $(\G,X)\cong(\G',X')$, then $(\G',X')$ is again a moduli set for $C$. This allows a kind of simplification for certain moduli sets. An example of this situation arises when $\G$ and $\G'$ are conjugated subgroups of $\GL_n(\K)$ and we have an action
$\GL_n(\K)\times \K^n\to \K^n$ which induces  an action
$\G\times X\to X$ where $X\subset\K^n$. Let us denote the action as $g\cdot x$ for $g\in\G$ and $x\in X$. If $\G'=p\G p^{-1}$ for a fixed $p\in\GL_n(\K)$, then we can define a new action $\G'\times X'\to X'$ where $X':=p\cdot X$ denoted by $g'\bullet x'$. The new action is given by
$(pgp^{-1})\bullet (p\cdot x):=p\cdot(g\cdot x)$ for any $g\in\G$ and $x\in X$.
Furthermore, the maps $f_1\colon \G\to\G'$ given by $f_1(g)=pgp^{-1}$ and $f_2\colon X\to X'$ such that $f_2(x):=p\cdot x$ give rise to an isomorphism $(f_1,f_2)\colon (\G,X)\cong(\G',X')$. Thus, if $(\G,X)$ is a moduli set for $C$, then so is $(\G',X')$. A particular case is the following: assume that $\G$ is the subgroup of $\GL_n(\K)$ generated by a matrix $m$ and consider an action $\G\times X\to X$ for some subset $X\subset\K^n$. If $m'$ is some canonical form associated to $m$ (Jordan normal  form or alternatively, Fröbenius normal form) so that $m'=pmp^{-1}$
for some invertible matrix $p\in\GL_n(\K)$, then the subgroup $\G'$ of $\GL_n(\K)$ generated by $m'$ induces a new action on $X'=p\cdot X$ as above. So the isomorphism $(\G,X)\cong (\G',X')$ tells us that the moduli set $(\G,X)$ can be replaced with the new one $(\G',X')$ and if $(\G,X)$ classifies the algebras of $C$, then also $(\G',X')$ classifies that class of algebras. 

\end{definition}

\begin{notation}\label{mrqpp}\rm
Let $\K$ be a field and $G_n=G_n(\K)$ the group defined above. Then if $j^q\equiv 1 (\hbox{mod } n)$, we denote by 
$\Omega_{q,n}$ the couple  $\Omega_{q,n}=(\Z_q,G_n(\K))$ with the action $\pi^l \bar k:=\overline{k}^{j^l}$
for any $k\in\K^{\times}$. 
\end{notation}

For example, we will use $(\Z_2,G_3)$ as a moduli set. Explicitly, the action is $\Z_2\times G_3\to G_3$ where $\pi\bar k=\overline{k}^2$. Thus, $\bar \m$ and $\bar \l$ are in the same orbit if and only if there is some $k\in\K^{\times}$ such that $\m=k^3\l$ or $\m=k^3\l^2$ or $\l=k^3\m^2$. Another example of this moduli set is $(\Z_2,G_3({\mathbb F}_4))$.

In this case, $({\mathbb F}_4^\times)^{[3]}$ is trivial so that $G_3({\mathbb F}_4)={\mathbb F}_4^\times$. 
The action $\Z_2\times {\mathbb F}_4^{\times}\to {\mathbb F}_4^{\times}$ is given by 
$\pi\a=\b$ and $\pi\b=\a$. Thus, the orbit set ${\mathbb F}_4^\times/\Z_2$ has two elements: the orbit of $1$ which is a singleton and the orbit of $\a$ which is $\{\a,\b\}$.

If $G$ is an abelian group and we consider the direct system $G\to G\to\cdots\to G\to\cdots$ where every homomorphism is given by $g\mapsto g^m$ (for  fixed $m$), then we can construct the inductive limit group $\Lim_{\to m} G $ whose elements are the equivalence classes $[g]=\{g^{m^k}\colon k\in\N\}$. Thus $[g]=[h]$ if and only if $g^{m^r}=h^{m^s}$ for some naturals $r$ and $s$. 
We will apply this construction when $G=G_n(\K)$. 
The canonical homomorphism $\K^{\times} \to G_n(\K)\to \Lim_{\to m}G_n(\K)$ allows us to say that
two elements $\l,\m\in\K^\times$ have the same image in $\Lim_{\to m}G_n(\K)$ when 
$\l^{m^r}=k^n\m^{m^s}$ for $r,s\in\N$ and $k\in\K^\times$.
For instance, in the moduli set $(\Z_2,G_3)$ whose set of orbits is $G_3/\Z_2$ we have  $G_3/\Z_2=\Lim_{\to 2}G_3$ and two elements $\l,\m\in\K^\times$ have the same image in $\Lim_{\to 2}G_3(\K)$ if and only if $\l^{2^r}=k^3\m^{2^s}$.

\begin{definition} \rm 
Let $\K$ be a field and  $n_1,\ldots,n_q \in \Z$. We define a map $\phi: \K^{\times} \to (\K^{\times})^{[n_1]} \times \ldots \times (\K^{\times})^{[n_q]}  $ such that $\phi(\lambda)=(\lambda^{n_1},\ldots,\lambda^{n_q})$. Observe that $\phi$ is a group homomorphism. We denote by $\Delta_{n_1,\ldots,n_q}=\phi(\K^{\times})$.
\end{definition}

\begin{definition}\rm \label{action1}
We define an action $\GL_n(\K)\times(\K^\times)^n\to (\K^\times)^n$ such that 
$$(m_{i,j})\cdot (v_i):=(\prod_j v_j^{m_{1,j}},\ldots,\prod_j v_j^{m_{n,j}})^T.$$
Then, for any subgroup $S$ of $\GL_n(\K)$ we can restrict to an action $S\times(\K^\times)^n\to (\K^\times)^n$. Denote the orbit of an element $v\in(\K^\times)^n$ by $[v]$.
For a fixed matrix $M=(m_{i,j}) \in \GL_n(\K)$, we can consider the subgroup $S=\{M^k\colon k\in\Z\}$ of $\GL_n(\K)$ and the action $S\times (\K^\times)^n\to (\K^\times)^n$ as before. This gives rise to the orbit space $(\K^\times)^n/S$, which will depend on $S$ (and of course of $\K$). In some cases, there can be a power of $M$ which equals the identity. Then, the group $S$ will be finite. On the contrary, we will have $S\cong \Z$.

\end{definition}

\begin{notation}\rm
Let $\K$ be an arbitrary field. If $p \in \K[x_1,\ldots,x_n]$, we  denote by $D(p):=\{(\l_1,\ldots \l_n) \in \K^{n} \colon p(\l_1,\ldots \l_n) \neq 0\}$. Observe that $D(p)$ is a basic open set in the Zariski topology of $\K^{n}$.
\end{notation}

\section{Classification of two and three dimensional simple evolution algebras}\label{simple}

 A simple $\K$-algebra $A$ is an algebra such that $A^2\ne 0$ and its only ideals are $0$ and $A$.
We recall also that a directed graph $E$ is said to be \emph{strongly connected} if given any two  different vertices $u$ y $v$ in $E^0$, there exists a path $\mu$ such that $s(\mu)=u$ and $r(\mu)=v$. We know that a perfect finite evolution algebra is simple if and only if its associated directed graph is strongly connected (see \cite[Proposition 2.7]{CKS1}).

\remove{\begin{lemma}\label{merengue}
Let $A$ be a perfect finite-dimensional evolution algebra and $E$ its associated directed graph relative to a natural basis $\B=\{e_1,\ldots,e_n\}$. If there is a closed path in $E$ containing all the vertices of the directed graph, then $A$ is simple.
\end{lemma}
\begin{proof}
Assume now that $A$ is a perfect evolution algebras, $\B$ a natural basis and $E$ the directed graph of $A$ relative to $\B$ with $E^0=\{1,\ldots,n\}$. Assume further that there is an edge from $i$ to $j$. Then $e_j^2\in (e_i^2)$ (the ideal generated by $e_i^2$). Consequently, if there is a closed path in $E$ containing all the vertices of the graph we conclude that $(e_j^2)= (e_i
^2)$ for any $i$ and $j$. Thus $(e_i^2)=A$ for any $i$, also for any nonzero $x\in A$ we have $(x)=A$, because we may write $x=\sum x^i e_i$ and some scalar $x^i$ is nonzero, so $(x)\ni x e_i=x^i e_i^2$ implying $A=(e_i^2)\subset (x)$. Consequently, any nonzero element $x$ satisfies $(x)=A$ implying that $A$ is a simple algebra.
\end{proof}
So we claim:
\begin{proposition}\label{yaemp}
Let $A$ be a perfect finite-dimensional evolution algebra and $E$ its directed graph relative to a natural basis. Then the following assertions are equivalent:
\begin{enumerate}
\item The directed graph $E$ has a closed path containing all the vertices. 
\item $A$ is simple.
\end{enumerate}
\end{proposition}
\begin{proof}
By Lemma \ref{merengue}, it remains to prove that the simplicity of $A$ implies the existence of the closed path. Let $E$ be the directed graph of $A$ relative to a natural basis $\B=\{e_1,\ldots,e_n\}$ and $E^0=\{1,\ldots,n\}$.
Take any vertex $i\in E^0$, then $A=\oplus_{j\in T(i)}\K e_j$ by \eqref{first} and simplicity of $A$. So $T(i)=E^0$ for any vertex $i \in E^0$. Thus given $i,j$ different vertices we have $j\in T(i)$ and $i\in T(j)$ hence there is a closed path $\lambda$  passing through both vertices $i$ and $j$. Take such a closed path $\l$ of maximum length, that is, the cardinal of $\l^0$ is maximum. Then one can prove that $\l^0=E^0$
hence we have found a closed path containing all the vertices of $E$.
\end{proof}
}

\begin{remark} \label{yaemp}\rm 
It is easy to check that a directed graph is strongly connected if and only if it has a  closed path containing all the vertices. So, a perfect evolution algebra whose associated directed graph has a closed path containing all the vertices is simple. In particular, if $A$ is a  perfect evolution algebra with structure matrix $(\omega_{ij})$ satisfying  $\omega_{ij}\ne 0$ for  $i \neq j$, then $A$ is simple.
\end{remark}

Assume that $A$ is an evolution $\K$-algebra such that any two natural basis $\B_1$ and $\B_2$ are related in the sense that for any $e\in \B_1$, there is a nonzero scalar $k \in \K$ and an $f\in \B_2$ such that $f=k e$. When this happen, we say that \textit{$A$ has a unique natural basis} (see \cite[Definition 2.1]{boudi20}). Note that $\sum_{e\in \B_1}(eA)e=\sum_{f\in \B_2}(fA)f$, then we have the following definition.

\begin{definition}\rm
In the previous conditions, we define the \emph{diagonal subspace}  given by $$\D(A):=\sum_{e\in \B} (eA)e.$$
\end{definition}

Observe that, for perfect evolution $\K$-algebras, the above definition applies (and, in particular, for simple algebras). Furthermore, if $f\colon A_1\to A_2$ is an isomorphism of algebras and $A_1$ is in the above conditions, $f(\D(A_1))=\D(A_2)$. The diagonal subspace is then an invariant for simple evolution $\K$-algebras,  and we can classify the simple three-dimensional algebras $A$ into $4$ disjoint classes depending on $\dim(\D(A))$. Note also that 
$\D(A)=\oplus_{i}\K e_i^2$ (extended to those indices $i$ such that $\omega_{ii}\ne 0$). In other words, $\dim(\D(A))=l(A)$   (the number of loops in the directed graph associated).

\begin{notation}\rm
We will consider the notation $\dos^{l(A),e(A)}_{\Gamma}$ for $A$ a two-dimensional evolution algebra, where $\Gamma$ is the set of nonzero parameters that appears in its corresponding structure matrix. Similarly, we will consider $_n\tres^{l(A),e(A)}_{\Gamma}$ for $A$ a three-dimensional evolution algebra where $n \in \N^*$ helps us to distinguish the cases in which the evolution algebras  have the same $l(A)$, $e(A)$ and $\Gamma$.

\end{notation}

\subsection{Two-dimensional case}
So, for instance, the case of two-dimensional simple evolution algebras $A$ will be useful to illustrate the above ideas. Since the graph must contain a cycle visiting the two vertices, the graph  will contain the subgraph:
$$\xymatrix{{\bullet}^{1} \ar@/^0.5pc/ [r]   & {\bullet}^{2} \ar@/^0.5pc/[l]}$$
\noindent so that $\dim(\D(A))=l(A)=0,1,2$.
\begin{enumerate}
\item If $\dim(\D(A))=0$ (implying $e(A)=2$),  we have $e_1^2=\a e_2$ and $e_2^2=\b e_1$. Without loss of generality, we may assume $e_1^2=e_2$ and $e_2^2=\l e_1$ with $\l\in\K^\times$. Thus, the structure matrix relative to $\{e_1,e_2\}$ is 
$\tiny\begin{pmatrix} 0 & \l\cr 1 & 0\end{pmatrix}$. This algebra is $\dos_{\l}^{0,2}$.
Now, we focus on natural basis whose structure matrix is of the kind above (zeros in the diagonal, $1$ in the $(2,1)$-entry and a nonzero element in the $(1,2)$-entry). We can see that any other natural basis of this kind is either $\{k e_1,k^2 e_2\}$ or  $\{k e_2,k^2\l e_1\}$ with $k\in\K^\times$. So, here we have an action of the group $\K^\times\times\Z_2$ on the set of natural bases. The structure matrix relative to $\{k e_1,k^2 e_2\}$ is $\tiny\begin{pmatrix} 0 & k^3\l\cr 1 & 0\end{pmatrix}$ while the corresponding one relative to 
$\{k e_2,k^2\l e_1\}$ is $\tiny\begin{pmatrix} 0 & k^3\l^2\cr 1 & 0\end{pmatrix}$. 
Therefore, the algebras $\dos_{\l}^{0,2}$ and $\dos_{\m}^{0,2}$ with structure matrices $\tiny\begin{pmatrix} 0 & \l\cr 1 & 0\end{pmatrix}$ and
$\tiny\begin{pmatrix} 0 & \m\cr 1 & 0\end{pmatrix}$ are isomorphic if and only if there is a
$k\in\K^\times$ such that $\m^{2^{r}}=k^3\l^{2^{s}}$ for some $r, s \in \N$. Thus, the isomorphism classes of algebras with structure matrix of type 
$\tiny\begin{pmatrix} 0 & \l\cr 1 & 0\end{pmatrix}$ are classified by the moduli set $\Omega_{2,3}=(\Z_2,G_3)$ (see Notation \ref{mrqpp}).  Equivalently, $\dos_{\l}^{0,2}\cong \dos_{\m}^{0,2}$ if and only if $\l$ and $\m$ have the same image in $\Lim_{\to 2}G_3(\K)$.

\item If $\dim(\D(A))=1$, then $e(A)=3$. We may choose our natural basis to have structure matrix $\tiny\begin{pmatrix}1 & \l\cr 1 & 0\end{pmatrix}$ with $\l\in\K^{\times}$. This algebra is $\dos_{\l}^{1,3}$.
\bigskip
$$
\tiny\xymatrix{\bullet^{1} \ar@(dl,ul) \ar@/^.5pc/[r]  & \bullet^{2} \ar@/^.5pc/[l]}$$

\bigskip

It is easy to check that if $\dos_{\m}^{1,3}\cong \dos_{\l}^{1,3}$, then  $\m=\l$. Thus, the isomorphic classes of algebras of this kind are in one-to-one correspondence with the set $\K^\times$. We may consider that the group acting is the trivial one. So, the moduli set is $(1,\K^\times)$.
\item If $\dim(\D(A))=2$ (so $e(A)=4$), we can choose the matrix relative to a natural basis of the form $\tiny\begin{pmatrix}1 & \l\cr \mu & 1\end{pmatrix}$ with 
$\l,\m \in\K^\times$ and $\l\m\ne 1$ for $\dos_{\l,\m}^{2,3}$.  
\bigskip
$$
\tiny\xymatrix{\bullet^{1} \ar@(dl,ul) \ar@/^.5pc/[r]  & \bullet^{2} \ar@(dr,ur)\ar@/^.5pc/[l]}$$

\bigskip
If  $\dos_{\l,\m}^{2,3}\cong \dos_{\l',\m'}^{2,3}$ then we can see that $(\l',\m')=(\l,\m)$ or 
$(\l',\m')=(\m,\l)$.
We can identify the set of matrices $\tiny\begin{pmatrix}1 & \l\cr \mu & 1\end{pmatrix}$ with $\l,\m\ne 0$, $\l\m\ne 1$ with
the subset $P$ of the two-dimensional affine space $A_2(\K)$ of those points $(x,y)$ such that $x,y\ne 0$, $xy\ne 1$. So $P$ is the Zariski open subset of the affine space obtained removing both axis $x=0$, $y=0$ and the points of the \lq\lq hyperbola\rq\rq\ $xy=1$.
We have an action of the group $\Z_2$ on $P$
such that $$
0\cdot (\l,\m):=(\l,\m) \hbox{ and } 
1\cdot (\l,\m):=(\m,\l).$$
In this case, the isomorphic classes of algebras are classified by the moduli set $(\Z_2,P)$. In fact, $P=(\K^\times)^2\cap D(\l\m-1)$.
\end{enumerate}
We can summarize the results  in the following table and theorem:
\begin{table}[H]
\renewcommand{\arraystretch}{2}
\begin{tabular}{|c|C{3cm}|c|c|c|}
\hline 
  Type algebra  & Graph & Structure matrix& Moduli set  & Orbit set\cr 
\hline
\multirow{2}{*}{$\dos_{\l}^{0,2}$}  &
\multirow{2}{*}{$\tiny\xymatrix{\bullet^{1} \ar@/^0.5pc/ [r]   & \bullet^{2} \ar@/^0.5pc/[l]}$}
& \multirow{2}{*}{$\tiny\begin{pmatrix} 0 & \l\cr 1 & 0\end{pmatrix}$}& \multirow{2}{*}{$\Omega_{2,3}$} & \multirow{2}{*}{$G_3/\Z_2\cong \Lim_{\to 2}G_3(\K)$} \cr  
\multirow{3}{*}{$\dos_{\l}^{1,3}$}  & \multirow{3}{*}{$
\tiny\xymatrix{\bullet^{1} \ar@(dl,ul) \ar@/^.5pc/[r]  & \bullet^{2} \ar@/^.5pc/[l]}$}&  \multirow{3}{*}{$\tiny\begin{pmatrix}1 & \l\cr 1 & 0\end{pmatrix}$} & \multirow{3}{*}{$(1,\K^\times)$} & \multirow{3}{*}{$\K^\times$} \cr 
 \multirow{4}{*}{$\begin{array}{c}
 \dos_{\l,\m}^{2,4} 
 \end{array}$} & \multirow{4}{*}{$
\tiny\xymatrix{\bullet^{1} \ar@(dl,ul) \ar@/^.5pc/[r]  & \bullet^{2} \ar@(dr,ur)\ar@/^.5pc/[l]}$} &  \multirow{4}{*}{$\tiny\begin{pmatrix}1 & \l\cr \mu & 1\end{pmatrix}$} & \multirow{4}{*}{$(\Z_2,(\K^\times)^2\cap D(\l\m-1))$} & \multirow{4}{*}{$\frac{(\K^\times)^2\cap D(\l\m-1)}{\Z_2}$} \cr 
 &&&&\\
  &&&&\\
 \hline
\end{tabular}
\caption{\footnotesize Moduli sets of simple two-dimensional evolution algebras.}\label{table:1}
\end{table}

\begin{theorem}\label{teorema1}
Let $A$ be a two-dimensional simple evolution $\K$-algebra. Then, $A$ is isomorphic to one and only one algebra of  type $\dos_{\l}^{0,2}$, $\dos_{\l}^{1,3}$ or $\dos_{\l,\mu}^{2,4}$ (see Table \ref{table:1}).
\end{theorem}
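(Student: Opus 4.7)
The plan uses the $l$-number $l(A)=\dim\D(A)$ as the primary separating invariant. Since $A$ is simple it is perfect, so $l(A)$ is well-defined, and by Remark \ref{yaemp} the directed graph $E$ of $A$ relative to any natural basis is strongly connected; with only two vertices this forces both arrows $1\to 2$ and $2\to 1$ to appear. Hence $l(A)\in\{0,1,2\}$, and the argument splits into these three cases. In each case I would fix any natural basis $\{e_1,e_2\}$ with structure constants $\omega_{ij}$ and exhibit scalars $k,k'\in\K^\times$ (possibly after a swap of the two basis vectors) such that the new natural basis $f_1=k e_1$, $f_2=k' e_2$ produces a structure matrix of the prescribed canonical shape, thereby placing $A$ in one of the three model families.

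The computations are short. For $l(A)=0$ both off-diagonal entries of the structure matrix are nonzero and the choice $k=1$, $k'=\omega_{21}$ rescales it to $\tiny\begin{pmatrix}0&\l\\ 1&0\end{pmatrix}$ with $\l=\omega_{12}\omega_{21}^2$, so $A\cong\dos_\l^{0,2}$. For $l(A)=1$, after swapping vertices if necessary to place the loop at vertex $1$, setting $k=\omega_{11}^{-1}$ and $k'=\omega_{21}\omega_{11}^{-2}$ gives $\tiny\begin{pmatrix}1&\l\\ 1&0\end{pmatrix}$ with $\l=\omega_{12}\omega_{21}^2\omega_{11}^{-3}\ne 0$, so $A\cong\dos_\l^{1,3}$. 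For $l(A)=2$ all four entries are nonzero and the choice $k=\omega_{11}^{-1}$, $k'=\omega_{22}^{-1}$ normalises the diagonal to $1$'s, producing $\tiny\begin{pmatrix}1&\l\\ \m&1\end{pmatrix}$ with $\l=\omega_{11}\omega_{12}\omega_{22}^{-2}$ and $\m=\omega_{21}\omega_{22}\omega_{11}^{-2}$; a direct computation shows $\l\m=\omega_{12}\omega_{21}/(\omega_{11}\omega_{22})$, so perfectness of $A$ (i.e.\ $\det M\ne 0$) translates exactly into $\l\m\ne 1$, and $A\cong\dos_{\l,\m}^{2,4}$.

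Uniqueness of the type is then immediate because $l(A)$ is an isomorphism invariant taking the three distinct values $0,1,2$ on the families $\dos_\l^{0,2}$, $\dos_\l^{1,3}$ and $\dos_{\l,\m}^{2,4}$ respectively. The one place where I would worry about an obstruction is whether the normalisation can really be carried out over an arbitrary field, that is, whether extracting a square (or higher) root of some element of $\K$ is implicitly required. The canonical shapes above, where the $(2,1)$-entry is normalised to $1$ in the first two cases and only the diagonal entries are normalised in the third, impose on $k$ and $k'$ only multiplicative equations of the form $k^a (k')^b = \text{(known scalar)}$ that can be solved without root extraction (each condition determines one of the two free scalars uniquely). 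This is what makes the argument valid over an arbitrary $\K$, and it is the reason behind the slightly asymmetric shape of the canonical forms.
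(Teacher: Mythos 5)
Your proposal is correct and follows essentially the same route as the paper: the paper's proof of Theorem \ref{teorema1} is precisely the case analysis of Section 3.1, which uses simplicity $\Rightarrow$ perfectness and strong connectedness of the two-vertex graph to force both off-diagonal entries to be nonzero, splits according to $\dim(\D(A))=l(A)\in\{0,1,2\}$, and normalises the structure matrix by rescaling the natural basis exactly as you do (with $\l\m\ne 1$ encoding perfectness in the third case). Your explicit verification that the normalising equations $k^a(k')^b=\text{const}$ need no root extraction makes the field-independence, which the paper leaves implicit in its ``without loss of generality'' steps, fully transparent.
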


\subsection{Three-dimensional case}
Consider a simple evolution algebra $A$ with $\dim(A)=3$.
From Remark \ref{yaemp}, we see that there is a  closed path going through the three vertices. 
Thus, $\dim(\D(A))=l(A)=0,1,2,3$.  
\subsubsection{$\boldsymbol{\dim(\D(A))=0}$}
Then $e(A)=3,4,5,6$, so we can consider the following cases:
\begin{enumerate}
    \item If $e(A)=3$ the associated directed graph is   
    \begin{center}
    
\resizebox{2.5cm}{.25cm}{
\xymatrix{ & \bullet^{2}  \ar@/^.4pc/[dr] & \cr
            \bullet^{1}\ar@/^.4pc/[ur]   & &\bullet^{3} \ar@/^.4pc/[ll]}}
 \end{center}            
             \bigskip

Without loss of generality, the structure matrix relative to a natural basis of the evolution algebra can be taken to be 
${\tiny\begin{pmatrix} 0 & 0 &\l\cr 1 & 0 & 0\cr 0 & 1 & 0\end{pmatrix}}$. If  ${\tiny\begin{pmatrix} 0 & 0 &\mu \cr 1 & 0 & 0\cr 0 & 1 & 0\end{pmatrix}}$ 
is the structure matrix relative to another natural basis, we get $\l^{2^{r}}= k^7\mu^{2^{s}}$  for some $k\in\K^\times$ and $r,s \in \N$. Thus, the isomorphism classes 
of these algebras are in one-to-one correspondence with the orbits of the group $G_7(\K)$ modulo the action of $\Z_2$ described by the moduli set $\Omega_{2,7}=(\Z_2,G_7(\K))$. That is, $\tres^{0,3}_{\l}\cong \tres^{0,3}_{\m}$ if and only if $\l$ and $\m$ have the same image in $\Lim_{\to 2}G_7(\K)$. If $\root 7 \of \lambda\in\K$, then one can see that the image of $\l$ in $\Lim_{\to 2}G_7(\K)$ is the same as the image of $1$ hence $\tres^{0,3}_{\l}\cong \tres^{0,3}_{1}$. This happens, for instance, if $\K=\R$ or $\mathbb C$. However, if $\K=\Q$ we have $\tres^{0,3}_{2}\not\cong \tres^{0,3}_{3}$.
\item If $e(A)=4$, the graph of $A$ has two possibilities:
\begin{center}
    \resizebox{6cm}{.25cm}{
\xymatrix{ & \bullet^{2} \ar@/^.5pc/[dl] \ar@/^.4pc/[dr] & \cr
            \bullet^{1} \ar@/^.4pc/[ur]   & &\bullet^{3} \ar@/^.4pc/[ll]}
            \hskip2cm \xymatrix{ & \bullet^{2} \ar@/^.5pc/[dl] \ar@/^.4pc/[dr] & \cr
            \bullet^{1} \ar@/^.4pc/[ur]   & &\bullet^{3} \ar@/^.4pc/[ul]}}
 \end{center}            
             \bigskip

But note that the second graph does not correspond to a perfect algebra.
In the first case,  there are nonzero scalars $\l,\m$ such that the structure matrix relative to a natural basis can be chosen to be ${\tiny\begin{pmatrix} 0 & \l &\m\cr 1 & 0 & 0\cr 0 & 1 & 0\end{pmatrix}}$ and we denote this evolution algebra by $\tres_{\l,\m}^{0,4}$.   If $\tres_{\l,\m}^{0,4} \cong \tres_{\l',\m'}^{0,4}$ then we find that 
$\l'=k^3\l$ and $\m'=k^7\m$ for some $k\in\K^{\times}$. Thus, we can define
the group action of $\K^{\times}$ on the set $(\K^{\times})^2$ such that 
$k\cdot (\l,\m):=(k^3\l,k^7\m)$. The moduli set $(\K^{\times},(\K^{\times})^2)$ classifies the algebras in this item. The orbit set is the underlying set of the quotient group 
$(\K^{\times})^2/\Delta_{3,7}$ where $\Delta_{3,7}=\{(k^3,k^7): k\in \K^{\times}\}$.
\item If $e(A)=5$ the graph of $A$ is
\begin{center}
    \resizebox{2.5cm}{.25cm}{
\xymatrix{ & \bullet^{2} \ar@/^.5pc/[dl] \ar@/^.4pc/[dr] & \cr
            \bullet^{1} \ar@/^.4pc/[ur]   & &\bullet^{3} \ar@/^.4pc/[ll]\ar@/^.4pc/[ul]}}
 \end{center}\smallskip
 
\noindent
and there are nonzero scalars $\l,\m,\gamma$ such the structure matrix can be chosen to be   
${\tiny\begin{pmatrix} 0 & \l &\m\cr 1 & 0 & \gamma\cr 0 & 1 & 0\end{pmatrix}}$.  Let us denote $A$ by $\tres_{\l,\m,\gamma}^{0,5}$. If  $\tres_{\l,\m,\gamma}^{0,5} \cong \tres_{\l',\m',\gamma'}^{0,5}$, then we find that $\l'=k^3\l$,
$\m'=k^7\m$ and $\g'=k^6\g$ whence we can the define the action of
$\K^\times$ on $(\K^\times)^3$ such that $k(\l,\m,\g)=(k^3\l,k^7\m,k^6\g)$ and consider the moduli set 
$(\K^\times,(\K^\times)^3)$ which classifies the algebras in this item. The orbit set is this case is the given by the quotient group $(\K^\times)^3/\Delta_{3,7,6}$.

\item If $e(A)=6$ the graph of $A$ is
\begin{center}
    \resizebox{2.5cm}{.25cm}{
\xymatrix{ & \bullet^{2} \ar@/^.5pc/[dl] \ar@/^.4pc/[dr] & \cr
            \bullet^{1} \ar@/^.4pc/[ur]  \ar@/^.5pc/[rr] & &\bullet^{3} \ar@/^.4pc/[ll]\ar@/^.4pc/[ul]}}
 \end{center}\smallskip
and there are nonzero scalars $\l,\m,\gamma, \delta$ such the structure matrix relative to a natural basis can be chosen to be   
${\tiny\begin{pmatrix} 0 & \l &\m \\
1 & 0 &\gamma \\
\delta & 1 & 0\end{pmatrix}}$. 
We denote this evolution algebra by $\tres_{\l,\m,\gamma, \delta}^{0,6}$.
If $\tres_{\l,\m,\gamma, \delta}^{0,6} \cong \tres_{\l',\m',\gamma', \delta'}^{0,6}$ then we find that $\delta'=k^{-2}\delta$,
$\l'=k^3\l$,  $\gamma'=k^6\gamma$ and $\m'=k^7\m$ whence we can define the action of
$\K^\times$ on $(\K^\times)^4$ such that $k(\delta,\l,\gamma,\mu)=(k^{-2}\delta,k^3\l,k^6\gamma, k^7 \m)$.
This action can be restricted to an action $\K^\times\times((\K^{\times})^4\cap D(\m-\l\delta\gamma))\to ((\K^{\times})^4\cap D(\m-\l\delta\gamma))$
and consider the moduli set 
$(\K^\times,(\K^{\times})^4\cap D(\m-\l\delta\gamma))$ which classifies the algebras in this item. The orbit set in this case is the given by the quotient group $$\dfrac{(\K^\times)^4\cap D(\m-\l\delta\gamma)}{\Delta_{-2,3,6,7}}.$$

\end{enumerate}

Thus, we can collect the previous results  in Table \ref{table:2}.

\begin{table}[h]
\renewcommand{\arraystretch}{1.5}
\begin{tabular}{|c|c|c|c|c|}
\hline 
 Type algebra  & Graph & Structure Matrix & Moduli set & Orbit set\cr 
\hline
\multirow{4}{*}{$\tres^{0,3}_{\l}$} & \resizebox{2.5cm}{.25cm}{
\xymatrix{ & \bullet^{2}  \ar@/^.4pc/[dr] & \cr
            \bullet^{1}\ar@/^.4pc/[ur]   & &\bullet^{3} \ar@/^.4pc/[ll]}}  &
\multirow{4}{*}{${\tiny\begin{pmatrix} 0 & 0 &\l\cr 1 & 0 & 0\cr 0 & 1 & 0\end{pmatrix}}$}
&
\multirow{4}{*}{$(1,G_7)$} & \multirow{4}{*}{$G_7$}\cr 
\multirow{4}{*}{$\tres_{\l,\m}^{0,4}$}  &
    \resizebox{2.5cm}{.25cm}{
\xymatrix{ & \bullet^{2} \ar@/^.5pc/[dl] \ar@/^.4pc/[dr] & \cr
            \bullet^{1} \ar@/^.4pc/[ur]   & &\bullet^{3} \ar@/^.4pc/[ll]}}
& 
\multirow{4}{*}{${\tiny\begin{pmatrix} 0 & \l &\m\cr 1 & 0 & 0\cr 0 & 1 & 0\end{pmatrix}}$}
& \multirow{4}{*}{$(\K^{\times},(\K^{\times})^2)$}  &    \multirow{4}{*}{$(\K^\times)^2/\Delta_{3,7}$}\cr 
\multirow{4}{*}{ $\tres_{\l,\m,\gamma}^{0,5}$}   & 
    \resizebox{2.5cm}{.25cm}{
\xymatrix{ & \bullet^{2} \ar@/^.5pc/[dl] \ar@/^.4pc/[dr] & \cr
            \bullet^{1} \ar@/^.4pc/[ur]   & &\bullet^{3} \ar@/^.4pc/[ll]\ar@/^.4pc/[ul]}}
& 
\multirow{4}{*}{${\tiny\begin{pmatrix} 0 & \l &\m\cr 1 & 0 & \gamma\cr 0 & 1 & 0\end{pmatrix}}$} & \multirow{4}{*}{$(\K^{\times},(\K^{\times})^3)$}  & \multirow{4}{*}{$(\K^\times)^3/\Delta_{3,7,6}$}\cr
\multirow{4}{*}{$\begin{matrix} \tres_{\l,\m,\gamma, \delta}^{0,6} \vspace*{-0.15cm}
\end{matrix}$}  & 
    \resizebox{2.5cm}{.25cm}{
\xymatrix{ & \bullet^{2} \ar@/^.5pc/[dl] \ar@/^.4pc/[dr] & \cr
            \bullet^{1} \ar@/^.4pc/[ur]  \ar@/^.5pc/[rr] & &\bullet^{3} \ar@/^.4pc/[ll]\ar@/^.4pc/[ul]}}
&
\multirow{4}{*}{${\tiny\begin{pmatrix} 0 & \l &\m \\
1 & 0 &\gamma \\
\delta & 1 & 0\end{pmatrix}}$}
& \multirow{4}{*}{$(\K^\times,(\K^{\times})^4\cap D(\m-\l\delta\gamma))$}& \multirow{4}{*}{$\dfrac{(\K^\times)^4\cap D(\m-\l\delta\gamma)}{\Delta_{-2,3,6,7}}$}  \cr 
&&&&\\
  \hline
\end{tabular}
\caption{\footnotesize Simple three-dimensional algebras with $\dim(\D(A))=0$.}\label{table:2}
\end{table}

\subsubsection{$\boldsymbol{\dim(\D(A))=1}$}

Then $e(A)=4,5,6,7$. Without loss of generality, we can take a  natural basis for each type of evolution algebras so that the structure matrix relative to it is of the form shown in Table \ref{l1}.
In most cases, we could have chosen another natural basis with a different structure matrix (i.e. with a different distribution of $1$'s and parameters). The advantage of the selected bases is that the isomorphism conditions derived from this choice are valid for any field.

Observe that, when the graphs have the same number of edges,  as for instance in the cases $_1\tres_{\l,\mu}^{1,5}$, $_2\tres_{\l,\mu}^{1,5}$, $_3\tres_{\l,\mu}^{1,5}$ and $_4\tres_{\l,\mu}^{1,5}$, the associated evolution algebras are non-isomorphic since the graphs are non-isomorphic. 

For each of the type algebras appearing in Table \ref{l1}, except for $\tres^{1,7}_{\l,\m,\delta,\nu}$, the algebras are isomorphic if and only if the parameters coincide.

For the evolution algebras of type $\tres^{1,7}_{\l,\m,\delta,\nu}$, 
we obtain $\tres^{1,7}_{\l,\m,\delta,\nu}\cong \tres^{1,7}_{\l',\m',\delta',\nu'} $ if and only if $[(\l,\m,\delta,\nu)]=[(\l',\m',\delta',\nu')]$ in $((\K^{\times})^4\cap D(\mu+\l\delta\nu))/S_1$  with $S_1=\{M_1^k\,\colon\,k\in\Z\}$ (if $\ch({\K})=p$ with $p$ prime,  then $S_1=\Z_{2p}$ and if $\ch({\K})=0 $, then $S_1=\Z$) and $M_1=\footnotesize\begin{pmatrix}
0 & 1 &\phantom{-} 2 & \phantom{-}5\\
1 & 0 & \phantom{-}2 &\phantom{-} 4\\
0 & 0 &\phantom{-} 2 &\phantom{-} 3\\
0 & 0 & -1 & -2
\end{pmatrix}$ with the action defined in Definition \ref{action1}.

\subsubsection{$\boldsymbol{\dim(\D(A))=2}$ }

Then $e(A)=5,6,7,8$. Analogously to the previous case, we can consider a natural basis for each type of evolution algebras such that its structure matrix is of the form displayed in Table \ref{l2}. We could have chosen another natural basis with a different structure matrix (i.e. with a different distribution of $1$'s and parameters), for this choice the isomorphism conditions for the algebras presented in the table is valid for any field. 

As before, the evolution algebras $_i\tres_{\l,\mu,\delta}^{2,6}$ with $i\in\{1,2,3,4\}$  are non-isomorphic because their associated directed graphs are non-isomorphic. The same happens for $_i\tres_{\l,\mu,\delta,\nu}^{2,7}$ with $i\in\{1,2,3\}$.

Again, for each of the type algebras appearing in Table \ref{l2}, except for $_4\tres^{2,6}_{\l,\m,\delta}$ and $\tres^{2,8}_{\l,\m,\delta,\nu,\xi}$, the algebras are isomorphic if and only if the parameters coincide. 

For the evolution algebras of type $_4\tres^{2,6}_{\l,\m,\delta}$, we get 
$_4\tres^{2,6}_{\l,\m,\delta}\cong\, _4\tres^{2,6}_{\l',\m',\delta'} $ if and only if $[(\l,\m,\delta)]=[(\l',\m',\delta')]$ in $((\K^{\times})^3\cap D(\l\m+\delta))/S_2$ where $M_2=\footnotesize\begin{pmatrix}
-1 & 0 & 0 \\
\phantom{-}2 & 0 & 1 \\
\phantom{-}2 & 1 & 0
\end{pmatrix}$ and $S_2=\{M_2^k\,\colon\,k\in\Z\}=\{Id,M_2\}$ (isomorphic to $\Z_2)$ and with the action defined in Definition \ref{action1}. 


\begin{center}
\begin{table}[h]
\renewcommand{\arraystretch}{0.8}
\begin{tabular}{|c|C{2.3cm}|C{2cm}|c|}
\hline 
&&&\\
Type algebra & Graph & Structure matrix &  Orbit set \cr
&&&  \\
\hline
&&&\\[-0.2cm]
\multirow{4}{*} {$\tres_{\l}^{1,4}$} &  
\resizebox{2cm}{.2cm}{
\xymatrix{ & \bullet^{2}  \ar@/^.4pc/[dr] & \cr
            \bullet^{1}\ar@(dl,ul)\ar@/^.4pc/[ur]   & &\bullet^{3} \ar@/^.4pc/[ll]}}
  &
\multirow{4}{*}{$\tiny\begin{pmatrix}1 & 0 & \l\cr 
1 & 0 & 0\cr
0 & 1 & 0\end{pmatrix}$} & \multirow{4}{*}{\footnotesize{$\K^\times$}}\cr
&&&\\[-0.2cm]
&&&\\[-0.2cm]
\multirow{4}{*}{$_1\tres_{\l,\mu}^{1,5}$} &  
\centerline{
\resizebox{2cm}{.2cm}{
\xymatrix{ & \bullet^{2} \ar@/^.5pc/[dl] \ar@/^.4pc/[dr] & \cr
            \bullet^{1}\ar@(dl,ul)\ar@/^.4pc/[ur]   & &\bullet^{3} \ar@/^.4pc/[ll]}}}
&
\multirow{4}{*}{
$\tiny\begin{pmatrix}1 & \m & \l\cr 
1 & 0 & 0\cr
0 & 1 & 0\end{pmatrix}$}&  \multirow{4}{*}{\footnotesize{$(\K^\times)^2$}}\cr
&&&\\[-0.2cm]
&&&\\[-0.2cm]
\multirow{4}{*}{$_2\tres_{\l,\mu}^{1,5}$} &  
\resizebox{2cm}{.2cm}{
\xymatrix{ & \bullet^{2}  \ar@/^.4pc/[dr] & \cr
            \bullet^{1}\ar@(dl,ul)\ar@/^.4pc/[ur] \ar@/^.5pc/[rr]  & &\bullet^{3}  \ar@/^.4pc/[ll]}}
&
\multirow{4}{*}{$\tiny\begin{pmatrix}1 & 0 & \l\cr 
1 & 0 & 0\cr
\m & 1 & 0\end{pmatrix}$} &\multirow{4}{*}{\footnotesize{$(\K^\times)^2$}}\cr
&&&\\[-0.2cm]
&&&\\ \multirow{4}{*}{$\begin{matrix}
_3\tres_{\l,\m}^{1,5}
\end{matrix}$}
& 
\resizebox{2cm}{.2cm}{
\xymatrix{ & \bullet^{2}  \ar@/^.4pc/[dr] & \cr
            \bullet^{1}\ar@(dl,ul)\ar@/^.4pc/[ur]   & &\bullet^{3} \ar@/^.4pc/[ll] \ar@/^.4pc/[ul]}}
& \multirow{4}{*}{$\tiny\begin{pmatrix}1 & 0 & \l\cr 
1 & 0 & \mu\cr
0 & 1 & 0\end{pmatrix}$}& \multirow{4}{*}{\footnotesize{$(\K^\times)^2\cap D(\l-\m)$}}\cr
&&&\\[-0.2cm]
&&&\\[-0.2cm]
\multirow{4}{*}{$_4\tres_{\l,\m}^{1,5}$} &  
\resizebox{2cm}{.2cm}{
\xymatrix{ & \bullet^{2} \ar@/^.5pc/[dl] \ar@/^.4pc/[dr] & \cr
            \bullet^{1}\ar@(dl,ul)\ar@/^.4pc/[ur]   & &\bullet^{3} \ar@/^.4pc/[ul]}}
&
\multirow{4}{*}{$\tiny\begin{pmatrix}1 & \l & 0\cr 
1 & 0 & \mu\cr
0 & 1 & 0\end{pmatrix}$}  &\multirow{4}{*}{\footnotesize{$(\K^\times)^2$}}\cr
&&&\\[-0.2cm]
&&&\\[-0.2cm]
\multirow{4}{*}{$\begin{matrix}
_1\tres_{\l,\m,\delta}^{1,6}
\end{matrix}$} & 
\resizebox{2cm}{.2cm}{
\xymatrix{ & \bullet^{2} \ar@/^.5pc/[dl] \ar@/^.4pc/[dr] & \cr
            \bullet^{1}\ar@(dl,ul)\ar@/^.4pc/[ur]   & &\bullet^{3} \ar@/^.4pc/[ll] \ar@/^.4pc/[ul]}}
&
\multirow{4}{*}{$\tiny\begin{pmatrix}1 & \l & \mu\cr 
1 & 0 & \delta\cr
0 & 1 & 0\end{pmatrix}$}&\multirow{4}{*}{\footnotesize{$(\K^\times)^3\cap D(\mu-\delta)$}}\cr
&&&\\[-0.2cm]
&&&\\[-0.2cm]
\multirow{4}{*}{$\begin{matrix}
_2\tres_{\l,\m,\delta}^{1,6}
\end{matrix}$} & 
\resizebox{2cm}{.2cm}{
\xymatrix{ & \bullet^{2} \ar@/^.5pc/[dl] \ar@/^.4pc/[dr] & \cr
            \bullet^{1}\ar@(dl,ul)\ar@/^.4pc/[ur] \ar@/^.1pc/[rr]   & &\bullet^{3} \ar@/^.4pc/[ul]}}
&
\multirow{4}{*}{$\tiny\begin{pmatrix}1 & \l & 0\cr 
1 & 0 & \m\cr
\delta & 1 & 0\end{pmatrix}$}&  \multirow{4}{*}{\footnotesize{$(\K^\times)^3\cap D(\l\delta-1))$}}\cr
&&&\\[-0.2cm]
&&&\\[-0.2cm]
\multirow{4}{*}{$_3\tres_{\l,\m,\delta}^{1,6}$}&  
\resizebox{2cm}{.2cm}{
\xymatrix{ & \bullet^{2} \ar@/^.5pc/[dl]  & \cr
            \bullet^{1}\ar@(dl,ul)\ar@/^.4pc/[ur]  \ar@/^.5pc/[rr] & &\bullet^{3} \ar@/^.4pc/[ll] \ar@/^.1pc/[ul]}}
&
\multirow{4}{*}{$\tiny\begin{pmatrix}1 & \m & \delta\cr 
\l & 0 & 1\cr
1 & 0 & 0\end{pmatrix}$} &\multirow{4}{*}{\footnotesize{$(\K^\times)^3$}}\cr
&&&\\[-0.2cm]
&&&\\[-0.2cm]
\multirow{4}{*}{$\begin{matrix} \tres_{\l,\m,\delta,\nu}^{1,7}
\end{matrix}$} & 
    \resizebox{2cm}{.2cm}{
\xymatrix{ & \bullet^{2} \ar@/^.5pc/[dl] \ar@/^.4pc/[dr] & \cr
            \bullet^{1} \ar@(dl,ul) \ar@/^.4pc/[ur]  \ar@/^.5pc/[rr] & &\bullet^{3} \ar@/^.4pc/[ll]\ar@/^.4pc/[ul]}}
&
\multirow{4}{*}{$\tiny\begin{pmatrix}1 & \l & \m\cr 
1 & 0 & \delta\cr
\nu & 1 & 0\end{pmatrix}$}& \multirow{4}{*}{$\frac{(\K^\times)^4\cap D(\mu+\l\delta\nu)}{S_1}$}\cr
 & & & \\
\hline
\end{tabular}
\caption{\footnotesize Simple three-dimensional algebras with $\dim(\D(A))=1$ and $\l$, $\delta$, $\mu$ and $\nu$ nonzero. \\ 
\centering The acting group is $\KN$.}\label{l1}
\end{table}
\end{center}

For the evolution algebras of type $\tres_{\l,\m,\delta,\nu,\xi}^{2,8}$, 
we obtain that $\tres_{\l,\m,\delta,\nu,\xi}^{2,8}\cong \tres_{\l',\m',\delta',\nu',\xi'}^{2,8} $ if and only if $[(\l,\m,\delta,\nu,\xi)]=[(\l',\m',\delta',\nu',\xi')]$ in $((\K^{\times})^5\cap D(\xi(\delta\m-1)-\nu(\m-\l)))/S_3$, where $M_3=\footnotesize\begin{pmatrix}
0 & \phantom{-}0 & 1 & 0 & 0 \\
0 & -1  & 0 & 0 & 0\\
1 & \phantom{-}0 & 0 & 0 & 0\\
0 & \phantom{-}2 & 0 & 0 & 1\\
0 & \phantom{-}2 & 0 & 1 & 0
\end{pmatrix}$ and $S_3=\{M_3^k\,\colon\,k\in\Z\}=\{Id,M_3\}$ (isomorphic to $\Z_2$)  with the action defined in Definition \ref{action1}.

\subsubsection{$\boldsymbol{\dim(\D(A))=3}$}
Then $e(A)=6,7,8,9$. Again, we can choose a natural basis for each type of evolution algebras so that its structure matrix is of the form given in Table \ref{l3}. As before, we could have chosen another natural basis with a different structure matrix (i.e. with a different distribution of $1$'s and parameters), however the isomorphism condition for the algebras presented in the table is valid for any field. 
For evolution algebras of type $_1\tres^{3,7}_{\l,\m,\delta,\nu}$ and $\tres^{3,8}_{\l,\m,\delta,\nu,\xi}$, the algebras are isomorphic if and only if the parameters coincide.

For the evolution algebras of type $\tres_{\l,\m,\delta}^{3,6}$, we have  
$\tres_{\l,\m,\delta}^{3,6}\cong \tres_{\l',\m',\delta'}^{3,6} $ if and only if
$[(\l,\m,\delta)]=[(\l',\m',\delta')]$ in $((\K^\times)^3\cap D(\l\m+\delta))/S_4$, where 
$M_4=\footnotesize\begin{pmatrix}
0 & 1 & -2 \\
2 & 0 & \phantom{-}1 \\
1 & 0 & \phantom{-}0
\end{pmatrix}$ and $S_4=\{M_4^k\,\colon\,k\in\Z\}=\{Id,M_4,M_4^2\}$ (isomorphic to $\Z_3$)  with the action defined in Definition \ref{action1}. In this case, the Fröbenius normal form is 
$F_{4}=\footnotesize\begin{pmatrix}
0 & 1 & 0 \\
0 & 0 & 1 \\
1 & 0 & 0
\end{pmatrix}$ in any field. The moduli set is isomorphic to  $((\K^{\times})^3\cap D(\l\m-\delta^3))/\Z_3$ with the action $F_{4}\cdot(\l,\m,\delta)=(\m,\delta,\l)$.

For the evolution algebras of type $_2\tres_{\l,\m,\delta,\nu}^{3,7}$,
we obtain that $_2\tres_{\l,\m,\delta,\nu}^{3,7}\cong\,  _2\tres_{\l',\m',\delta',\nu'}^{3,7}$ if and only if
$[(\l,\m,\delta,\nu)]=[(\l',\m',\delta',\nu')]$ in $((\K^{\times})^4\cap D(\lambda\mu\nu-\nu +\xi))/S_5$, where  $M_5=\footnotesize\begin{pmatrix}
0 & 0 & 1 & -2 \\
0 & 0 & 0 & \phantom{-}1 \\
1 & 2 & 0 & \phantom{-}0\\
0 & 1 & 0 & \phantom{-}0
\end{pmatrix}$ and $S_5=\{M_5^k\,\colon\,k\in\Z\}=\{Id,M_5\}$ (isomorphic to $\Z_2$) with the action defined in Definition \ref{action1}.

For the evolution algebras of type $\tres_{\l,\m,\delta,\nu,\xi,\gamma}^{3,9}$, 
we get that $\tres_{\l,\m,\delta,\nu,\xi,\gamma}^{3,9}\cong \tres_{\l',\m',\delta',\nu',\xi',\gamma'}^{3,9} $  if and only if there is an element $X$ in the subgroup generated by $M_6$ and $M_7$ such that 
$X\cdot (\l,\m,\delta,\nu,\xi,\gamma)=(\l',\m',\delta',\nu',\xi',\gamma')$ 

\medskip
\noindent
with  $M_6=\footnotesize\begin{pmatrix}
0 & \phantom{-}0 & 1 & 0 & 0 & 0 \\
0 & -1 & 0 & 0 & 0 & 0 \\
1 & \phantom{-}0 & 0 & 0 & 0 & 0\\
0 & \phantom{-}2 & 0 & 0 & 1 & 0\\
0 & \phantom{-}2 & 0 & 1 & 0 & 0\\
0 & \phantom{-}1 & 0 & 0 & 0 & 1
\end{pmatrix}$
and 
$M_{7}=\footnotesize\begin{pmatrix}
\phantom{-}0 & 0 & 0 & 1 & 0 & -2 \\
-1 & 0 & 0 & 0 & 1 & -2 \\
\phantom{-}0 & 1 & 0 & 0 & 0 & \phantom{-}1\\
\phantom{-}2 & 0 & 0 & 0 & 0 & \phantom{-}1\\
\phantom{-}2 & 0 & 1 & 0 & 0 & \phantom{-}0\\
\phantom{-}1 & 0 & 0 & 0 & 0 & \phantom{-}0
\end{pmatrix}.$

The moduli set is isomorphic to ${((\K^\times)^6\cap D(\xi(\delta\mu-1)+\nu(\l-\mu)- \gamma(\delta\l-1)))}/{\mathbb{S}_3}$, where $\mathbb{S}_3$ is the permutation group of three elements. In fact, this group is generated by the element $M_6$, which is of order $2$, and $M_7$, which is of order $3$. Moreover, $M_6M_7=M_7^2M_6$. This is the only case where the acting group is nonabelian.

\medskip

In conclusion, we can summarize the classification of three-dimensional simple evolution algebras over arbitrary fields in the below theorem.

\begin{theorem}\label{teorema2}
Let $A$ be a three-dimensional simple evolution $\K$-algebra. We distinguish several cases:
\begin{enumerate}[\rm (i)]
    \item If $\dim(\D(A))=0$, then $A$ is isomorphic to one and only one of the type evolution algebras $\tres_{\l}^{0,3}$, $\tres_{\l,\m}^{0,4}$, $\tres_{\l,\m,\gamma}^{0,5}$ and $\tres_{\l,\m,\gamma,\delta}^{0,6}$.
    \item If $\dim(\D(A))=1$, then $A$ is isomorphic to one and only one of the type evolution algebras $\tres_{\l}^{1,3}$, $_i\tres_{\l,\m}^{1,5}$ with $i\in \{1,2,3,4\}$,  $_j\tres_{\l,\m,\delta}^{1,6}$ for $j\in \{1,2,3\}$ and $\tres_{\l,\m,\delta, \nu}^{1,7}$.
    \item  If $\dim(\D(A))=2$, then $A$ is isomorphic to one and only one of the type evolution algebras $\tres_{\l,\mu}^{2,5}$, $_i\tres_{\l,\m,\delta}^{2,6}$ with $i\in \{1,2,3,4\}$, $_j\tres_{\l,\m,\delta,\nu}^{2,7}$ for $j\in \{1,2,3\}$ and $\tres_{\l,\m,\delta,\nu, \xi}^{2,8}$.
    
     \item  If $\dim(\D(A))=3$, then $A$ is isomorphic to one and only one of the type evolution algebras $\tres_{\l,\m,\delta}^{3,6}$, $_i\tres_{\l,\m,\delta,\nu}^{3,7}$ with $i\in\{1,2\}$,  $\tres_{\l,\m,\delta,\nu,\xi}^{3,8}$ and $\tres_{\l,\m,\delta,\nu,\xi,\gamma}^{3,9}$.

    \end{enumerate}
\end{theorem}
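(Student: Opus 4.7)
The strategy is to prove each of the four items (i)--(iv) in parallel, using the fact that $\dim(\D(A))=l(A)$ is an isomorphism invariant (together with $e(A)$, and together with the isomorphism class of the associated directed graph, for perfect algebras). Since $A$ is simple and finite-dimensional it is perfect, so by Remark \ref{yaemp} its graph on three vertices is strongly connected, i.e. contains a closed path through all three vertices. In particular the three off-diagonal edges forming a $3$-cycle must be present (up to relabelling vertices), which forces $e(A)\ge 3$ when $l(A)=0$ and $e(A)\ge l(A)+3$ in general, giving the ranges $e(A)\in\{3,\dots,6\},\{4,\dots,7\},\{5,\dots,8\},\{6,\dots,9\}$ for $l(A)=0,1,2,3$ respectively. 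This justifies the enumeration of cases matching the tables.

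For each pair $(l(A),e(A))$ one first lists, up to graph isomorphism (i.e.\ up to relabelling of the basis), all strongly connected directed graphs on $3$ vertices with the prescribed number of loops and edges. In every case one obtains the candidate graphs displayed in Tables \ref{table:2}, \ref{l1}, \ref{l2} and \ref{l3}; the graph that appears in the $e(A)=4$ case of $\dim(\D(A))=0$ but is rejected is ruled out because the corresponding structure matrix is singular, contradicting perfectness (the algebra of a simple finite evolution algebra is perfect by \cite[Prop.\ 2.7]{CKS1} combined with the strong connectedness). Having fixed the graph, one then uses a rescaling of the natural basis $e_i\mapsto k_i e_i$ together with the freedom of permuting the basis vectors among those that are interchangeable by a graph automorphism, in order to set as many structure constants as possible to $1$. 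This yields the canonical structure matrices listed in the tables; the particular normalisation is chosen so that the remaining $1$'s do not involve taking roots, which is the reason why the isomorphism conditions obtained are valid over an arbitrary field $\K$.

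For the isomorphism analysis within a fixed model $_i\tres_{\l_1,\dots,\l_n}^{l,e}$, one takes two copies of the model with parameter tuples $(\l_1,\dots,\l_n)$ and $(\l'_1,\dots,\l'_n)$ and writes down the general change of natural basis $e_i\mapsto k_i e_{\sigma(i)}$ (with $k_i\in\K^\times$ and $\sigma$ a permutation belonging to the graph automorphism group) that preserves the normal form of the structure matrix. Equating coefficients in each case gives a system of monomial equations in the $k_i$ whose compatibility condition reads either (a) $(\l'_1,\dots,\l'_n)=(\l_1,\dots,\l_n)$ (rigid cases), (b) $\l'_i=\prod_j k_j^{a_{ij}}\l_j$ for a single parameter $k\in\K^\times$ and integer exponents $a_{ij}$ (cases with $\Delta_{n_1,\ldots,n_q}$ orbits, handled via Definition \ref{action1}), or (c) the same pattern but composed with a finite-order matrix $M$ encoding a nontrivial graph automorphism, producing the groups $S_i=\langle M_i\rangle$ that appear in Tables \ref{l1}, \ref{l2}, \ref{l3}. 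In the cases with a single loop distinguishing the vertex $1$, the freedom in $\sigma$ is killed and one lands in case (a) or (b); when two or all three loops are present, a nontrivial permutation of the looped vertices survives and one obtains $S_i$ with $S_i\cong\Z_2,\Z_3$ or $\mathbb{S}_3$.

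The main technical obstacle is the computation of $S_1$ in the case $\tres^{1,7}_{\l,\m,\delta,\nu}$ and of the actions in the cases $_4\tres^{2,6}$, $\tres^{2,8}$, $\tres^{3,6}$, $_2\tres^{3,7}$ and $\tres^{3,9}$: one has to set up the linear system on the exponents in $\log|k_i|$ and check that the resulting matrix $M_i$ has the claimed order (or generates $\Z$, depending on $\ch(\K)$). For $\tres^{3,6}$ the companion matrix of $M_4^3=\mathrm{Id}$ explains the jump to the Fröbenius normal form $F_4$ and the cyclic action $(\l,\m,\delta)\mapsto (\m,\delta,\l)$, which is the only place where the general-field hypothesis forces the use of the rational canonical form instead of a Jordan form. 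For $\tres^{3,9}_{\l,\m,\delta,\nu,\xi,\gamma}$ the three simultaneous permutations of the vertices produce two generators $M_6,M_7$ with $M_6^2=M_7^3=\mathrm{Id}$ and $M_6M_7=M_7^2M_6$, giving the symmetric group $\mathbb{S}_3$; verifying these relations is the only combinatorial check beyond pure linear algebra. Once all these orbit computations are collected, putting together items (i)--(iv) gives exactly the enumeration claimed in the statement, each algebra being realised by one, and only one, model because distinct models have distinct invariants $(l(A),e(A))$ or distinct associated directed graphs.
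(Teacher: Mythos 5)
Your overall strategy coincides with the paper's: Theorem \ref{teorema2} is proved there not by a self-contained argument but by the case-by-case development of Subsections 3.2.1--3.2.4, organised exactly as you describe (stratify by the invariant $\dim(\D(A))=l(A)$, then by $e(A)$, then by the isomorphism class of the strongly connected graph, normalise the structure matrix by rescaling the natural basis, and finally compute the residual action of rescalings composed with graph automorphisms to get the moduli sets). Your remarks about perfectness, the rejection of the singular graph in the $l(A)=0$, $e(A)=4$ case, and the origin of the groups $S_i$ (including $S_1\cong\Z$ or $\Z_{2p}$, the Fr\"obenius normal form for $\tres^{3,6}$, and $\mathbb{S}_3$ for $\tres^{3,9}$) all match the paper.

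There is, however, one step that is false as stated: the claim that strong connectedness forces ``the three off-diagonal edges forming a $3$-cycle'' to be present. A closed path through all three vertices may revisit vertices, so no directed $3$-cycle need exist; indeed several of the models you cite have none (e.g.\ $_4\tres^{1,5}_{\l,\m}$, $_3\tres^{2,6}_{\l,\m,\delta}$, $_2\tres^{3,7}_{\l,\m,\delta,\nu}$, whose off-diagonal edges are $1\leftrightarrow 2$ and $2\leftrightarrow 3$ only), so your assertion contradicts the very tables your enumeration must reproduce. The correct justification of the bound $e(A)\ge l(A)+3$ is simply that in a strongly connected graph on three vertices every vertex needs at least one off-diagonal outgoing edge. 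Relatedly, your statement that a single loop ``kills'' the permutation freedom is contradicted by $\tres^{1,7}_{\l,\m,\delta,\nu}$, where the loop sits at vertex $1$ yet the transposition of vertices $2$ and $3$ is a graph automorphism and is precisely what produces the nontrivial (indeed infinite, in characteristic $0$) group $S_1$. Both errors are local and repairable, and the rest of the sketch follows the paper's route.
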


 \begin{center}
\begin{table}[H]
\renewcommand{\arraystretch}{0.8}
\begin{tabular}{|c|C{2cm}|C{2.5cm}|c|}
\hline 
&&& \\
Type algebra  & Graph & Structure matrix &  Orbit set \cr
&&&  \\
\hline
&&& \\[-0.2cm]
&&& \\[-0.2cm]
&&& \\[-0.2cm]
&&&\\[-0.2cm]
\multirow{4}{*} {$\tres_{\l,\m}^{2,5}$} &  
\resizebox{1.7cm}{.2cm}{
\xymatrix{ & \bullet^{2} \ar@(ur,ul) \ar@/^.4pc/[dr] & \cr
            \bullet^{1}\ar@(dl,ul)\ar@/^.4pc/[ur]   & &\bullet^{3} \ar@/^.4pc/[ll]}}
  &
\multirow{4}{*}{$\tiny\begin{pmatrix}1 & 0 & \mu\cr 
\lambda & 1 & 0\cr
0 & 1 & 0\end{pmatrix}$}
&
\multirow{4}{*}{\footnotesize{$(\K^\times)^2$}}\cr
&&&\\[-0.2cm]
&&&\\[-0.2cm]
\hline
&&&\\[-0.2cm]
&&& \\[-0.2cm]
&&& \\[-0.2cm]
&&&\\[-0.2cm]
\multirow{4}{*}{$\begin{matrix}
_1\tres_{\l,\mu,\delta}^{2,6}
\end{matrix}$} & 
\resizebox{1.7cm}{.2cm}{
\xymatrix{ & \bullet^{2} \ar@(ur,ul) \ar@/^.4pc/[dr] & \cr
            \bullet^{1}\ar@(dl,ul)\ar@/^.4pc/[ur] \ar@/^.5pc/[rr]  & &\bullet^{3}  \ar@/^.4pc/[ll]}}
&
\multirow{4}{*}{
$\tiny\begin{pmatrix}1 & 0 & \delta\cr 
\lambda & 1 & 0\cr
\mu & 1 & 0\end{pmatrix}$}
&
\multirow{4}{*}{\footnotesize{$(\K^\times)^3\cap D(\l-\m)$}}\cr
&&&\\[-0.2cm]
&&&\\[-0.2cm]
\hline
&&&\\[-0.2cm]
&&& \\[-0.2cm]
&&& \\[-0.2cm]
&&&\\[-0.2cm]
\multirow{4}{*}{$_2\tres_{\l,\mu,\delta}^{2,6}$} &  
\centerline{
\resizebox{1.7cm}{.2cm}{
\xymatrix{ & \bullet^{2} \ar@(ur,ul)  \ar@/^.5pc/[dl] \ar@/^.4pc/[dr] & \cr
            \bullet^{1}\ar@(dl,ul)\ar@/^.4pc/[ur]   & &\bullet^{3} \ar@/^.4pc/[ll]}}} 
&
\multirow{4}{*}{$\tiny\begin{pmatrix}1 & \mu & \delta\cr 
\l & 1 & 0\cr
0 & 1 & 0\end{pmatrix}$}
&
\multirow{4}{*}{\footnotesize{$(\K^\times)^3$}}\cr
&&&\\[-0.2cm]
&&&\\[-0.2cm]
\hline
&&&\\[-0.2cm]
&&& \\[-0.2cm]
&&& \\[-0.2cm]
&&&\\[-0.2cm]
\multirow{4}{*}{
$_3\tres_{\l,\m,\delta}^{2,6}$}
& 
\resizebox{1.7cm}{.2cm}{
\xymatrix{ & \bullet^{2} \ar@(ur,ul) \ar@/^.5pc/[dl] \ar@/^.4pc/[dr] & \cr
            \bullet^{1}\ar@(dl,ul)\ar@/^.4pc/[ur]   & &\bullet^{3} \ar@/^.4pc/[ul]}}
& \multirow{4}{*}{$\tiny\begin{pmatrix}1 & \mu & 0\cr 
\l & 1 & \delta\cr
0 & 1 & 0\end{pmatrix}$}
&
\multirow{4}{*}{\footnotesize{$(\K^\times)^3$}}\cr
&&&\\[-0.2cm]
&&&\\[-0.2cm]
\hline
&&&\\[-0.2cm]
&&& \\[-0.2cm]
&&& \\[-0.2cm]
&&&\\[-0.2cm]
\multirow{4}{*}{$\begin{matrix}
_4\tres_{\l,\m,\delta}^{2,6}
\end{matrix}$}  &  
    \resizebox{1.7cm}{.2cm}{
\xymatrix{ & \bullet^{2}\ar@(ur,ul) \ar@/^.4pc/[dr] & \cr
            \bullet^{1} \ar@(dl,ul)   \ar@/^.5pc/[rr] & &\bullet^{3} \ar@/^.4pc/[ll]\ar@/^.4pc/[ul]}}
&
\multirow{4}{*}{$\tiny\begin{pmatrix}1 & 0 & \mu\cr 
0 & 1 & \delta\cr
\l & 1 & 0\end{pmatrix}$} 
&
\multirow{4}{*}{$\frac{(\K^\times)^3\cap D(\l\m+\delta)}{\Z_2}$}\cr
&&&\\[-0.2cm]
&&&\\[-0.2cm]
\hline
&&&\\[-0.2cm]
&& &\\[-0.2cm]
&& &\\[-0.2cm]
&&&\\[-0.2cm]
\multirow{5}{*}{$\begin{matrix}
_1\tres_{\l,\m,\delta,\nu}^{2,7}
\end{matrix}$} & 
\resizebox{1.7cm}{.2cm}{
\xymatrix{ & \bullet^{2} \ar@(ur,ul) \ar@/^.5pc/[dl] \ar@/^.4pc/[dr] & \cr
            \bullet^{1}\ar@(dl,ul)\ar@/^.4pc/[ur]   & &\bullet^{3} \ar@/^.4pc/[ll] \ar@/^.4pc/[ul]}}
&
\multirow{5}{*}{$\tiny\begin{pmatrix}1 & \mu & \delta\cr 
\l & 1 & \nu\cr
0 & 1 & 0\end{pmatrix}$}
&
\multirow{5}{*}{\footnotesize{$(\K^\times)^4\cap D(\l\delta-\nu)$}}\cr
&&&\\[-0.2cm]
&&&\\[-0.2cm]
\hline
&&&\\[-0.2cm]
&& &\\[-0.2cm]
&& &\\[-0.2cm]
&&&\\[-0.2cm]
\multirow{4}{*}{$\begin{matrix}
_2\tres_{\l,\m,\delta,\nu}^{2,7}
\end{matrix}$} & 
\resizebox{1.7cm}{.2cm}{
\xymatrix{ & \bullet^{2} \ar@(ur,ul) \ar@/^.5pc/[dl] \ar@/^.4pc/[dr] & \cr
            \bullet^{1}\ar@(dl,ul)\ar@/^.4pc/[ur] \ar@/^.1pc/[rr]   & &\bullet^{3} \ar@/^.4pc/[ul]}}
&
\multirow{4}{*}{$\tiny\begin{pmatrix}1 & \delta & 0\cr 
\l & 1 & \nu\cr
\m & 1 & 0\end{pmatrix}$}
&
\multirow{4}{*}{\footnotesize{$(\K^\times)^4\cap D(\m\delta\nu-\nu)$}}\cr
&&&\\[-0.2cm]
&&&\\[-0.2cm]
\hline
&&&\\[-0.2cm]
&& &\\[-0.2cm]
&&&\\[-0.2cm]
&&&\\[-0.2cm]
\multirow{4}{*}{$\begin{matrix}
_3\tres_{\l,\m,\delta,\nu}^{2,7}
\end{matrix}$}& 
    \resizebox{1.7cm}{.2cm}{
\xymatrix{ & \bullet^{2}\ar@(ur,ul)  \ar@/^.4pc/[dr] & \cr
            \bullet^{1} \ar@(dl,ul) \ar@/^.4pc/[ur]  \ar@/^.5pc/[rr] & &\bullet^{3} \ar@/^.4pc/[ll]\ar@/^.4pc/[ul]}}
&
\multirow{4}{*}{$\tiny\begin{pmatrix}1 & 0 & \delta\cr 
\l & 1 & \nu\cr
\m & 1 & 0\end{pmatrix}$}
&
\multirow{4}{*}{\footnotesize{$(\K^\times)^4\cap D(\delta(\l-\m)-\nu)$}}\cr
&&&\\[-0.2cm]
&&&\\[-0.2cm]
\hline
&&&\\[-0.2cm]
&&& \\[-0.2cm]
&&& \\[-0.2cm]
&&&\\[-0.2cm]
\multirow{4}{*}{$\begin{matrix}
\tres_{\l,\m,\delta,\nu,\xi}^{2,8}
\end{matrix}$} &  
    \resizebox{1.7cm}{.2cm}{
\xymatrix{ & \bullet^{2}\ar@(ur,ul) \ar@/^.5pc/[dl] \ar@/^.4pc/[dr] & \cr
            \bullet^{1} \ar@(dl,ul) \ar@/^.4pc/[ur]  \ar@/^.5pc/[rr] & &\bullet^{3} \ar@/^.4pc/[ll]\ar@/^.4pc/[ul]}}
&
\multirow{4}{*}{$\tiny\begin{pmatrix}1 & \delta & \nu\cr 
\l & 1 & \xi\cr
\mu & 1 & 0\end{pmatrix}$}&
\multirow{4}{*}{$\frac{(\K^\times)^5\cap D(\xi(\delta\m-1)-\nu(\m-\l))}{\Z_2}$}\cr
 & &  &\\
\hline
\end{tabular}
\caption{\footnotesize Simple three-dimensional algebras with $\dim(\D(A))=2$ and $\l$, $\delta$, $\mu$,$\nu$ and $\xi$ nonzero. \\
\centering The acting  group is $\KN$.}\label{l2}
\end{table}
\end{center}

 \begin{center}
\begin{table}[h]
\renewcommand{\arraystretch}{0.8}
\begin{tabular}{|c|C{2cm}|C{2cm}|c|}
\hline 
&&& \\
Type algebra  & Graph & Structure matrix &  Orbit set \cr
&&&  \\
\hline
&&& \\[-0.2cm]
&&& \\[-0.2cm]
&&& \\[-0.2cm]
&&&\\[-0.2cm]
\multirow{4}{*} {$\begin{matrix}
\tres_{\l,\m,\delta}^{3,6}
\end{matrix}$} &  
\resizebox{1.7cm}{.2cm}{
\xymatrix{ & \bullet^{2} \ar@(ur,ul) \ar@/^.4pc/[dr] & \cr
            \bullet^{1}\ar@(dl,ul)\ar@/^.4pc/[ur]   & &\bullet^{3}\ar@(dr,ur) \ar@/^.4pc/[ll]}}
  &
\multirow{4}{*}{$\tiny\begin{pmatrix}1 & 0 & \mu\cr 
\lambda & 1 & 0\cr
0 & 1 & \delta\end{pmatrix}$}
&
\multirow{4}{*}{$\frac{(\K^\times)^3\cap D(\l\m+\delta)}{\Z_3}$}\cr
&&&\\[-0.2cm]
&&&\\[-0.2cm]
\hline
&&&\\[-0.2cm]
&&& \\[-0.2cm]
&&& \\[-0.2cm]
&&&\\[-0.2cm]
\multirow{4}{*}{$\begin{matrix}
_1\tres_{\l,\mu,\delta,\nu}^{3,7}
\end{matrix}$} & 
\resizebox{1.7cm}{.2cm}{
\xymatrix{ & \bullet^{2} \ar@(ur,ul) \ar@/^.4pc/[dr] & \cr
            \bullet^{1}\ar@(dl,ul)\ar@/^.4pc/[ur] \ar@/^.5pc/[rr]  & &\bullet^{3}\ar@(dr,ur)  \ar@/^.4pc/[ll]}}
&
\multirow{4}{*}{
$\tiny\begin{pmatrix}1 & 0 & \delta\cr 
\lambda & 1 & 0\cr
\mu & 1 & \nu\end{pmatrix}$}
&
\multirow{4}{*}{\footnotesize{$(\K^\times)^4\cap D(\m-\l-1)$}}\cr
&&&\\[-0.2cm]
&&&\\[-0.2cm]
\hline
&&&\\[-0.2cm]
&&& \\[-0.2cm]
&&& \\[-0.2cm]
&&&\\[-0.2cm]
\multirow{4}{*}{
$\begin{matrix}
_2\tres_{\l,\m,\delta,\nu}^{3,7}
\end{matrix}$}
& 
\resizebox{1.7cm}{.2cm}{
\xymatrix{ & \bullet^{2} \ar@(ur,ul) \ar@/^.5pc/[dl] \ar@/^.4pc/[dr] & \cr
            \bullet^{1}\ar@(dl,ul)\ar@/^.4pc/[ur]   & &\bullet^{3}\ar@(dr,ur) \ar@/^.4pc/[ul]}}
& \multirow{4}{*}{$\tiny\begin{pmatrix}1 & \mu & 0\cr 
\l & 1 & \delta\cr
0 & 1 & \nu\end{pmatrix}$}
&
\multirow{4}{*}{$\frac{(\K^{\times})^4\cap D(\l\m\nu-\nu+\delta)}{\Z_2}$}\cr
&&&\\[-0.2cm]
&&&\\[-0.2cm]
\hline
&&&\\[-0.2cm]
&& &\\[-0.2cm]
&& &\\[-0.2cm]
&&&\\[-0.2cm]
\multirow{4}{*}{$\begin{matrix}
\tres_{\l,\m,\delta,\nu,\xi}^{3,8}
\end{matrix}$} & 
\resizebox{1.7cm}{.2cm}{
\xymatrix{ & \bullet^{2} \ar@(ur,ul) \ar@/^.5pc/[dl] \ar@/^.4pc/[dr] & \cr
            \bullet^{1}\ar@(dl,ul)\ar@/^.4pc/[ur] \ar@/^.1pc/[rr]   & &\bullet^{3}\ar@(dr,ur) \ar@/^.4pc/[ul]}}
&
\multirow{4}{*}{$\tiny\begin{pmatrix}1 & \delta & 0\cr 
\l & 1 & \xi\cr
\m & 1 & \nu\end{pmatrix}$}
&
\multirow{4}{*}{\footnotesize{$(\K^\times)^5\cap D(\xi(\delta\mu-1)-\nu(\delta\l-1))$}}\cr
&&&\\[-0.2cm]
&&&\\[-0.2cm]
\hline
&&&\\[-0.2cm]
&&& \\[-0.2cm]
&&& \\[-0.2cm]
&&&\\[-0.2cm]
\multirow{4}{*}{$\begin{matrix}
\tres_{\l,\m,\delta,\nu,\xi,\gamma}^{3,9}
\end{matrix}$} 
&  
    \resizebox{1.7cm}{.2cm}{
\xymatrix{ & \bullet^{2}\ar@(ur,ul) \ar@/^.5pc/[dl] \ar@/^.4pc/[dr] & \cr
            \bullet^{1} \ar@(dl,ul) \ar@/^.4pc/[ur]  \ar@/^.5pc/[rr] & &\bullet^{3}\ar@(dr,ur) \ar@/^.4pc/[ll]\ar@/^.4pc/[ul]}}
&
\multirow{4}{*}{$\tiny\begin{pmatrix}1 & \delta & \nu\cr 
\l & 1 & \xi\cr
\mu & 1 & \gamma\end{pmatrix}$}&
\multirow{4}{*}{$\frac{(\K^\times)^6\cap D(\xi(\delta\mu-1)+\nu(\l-\mu)- \gamma(\delta\l-1))}{\mathbb{S}_3}$}\cr
&& &  \\
\hline
\end{tabular}
\caption{\footnotesize Simple three-dimensional algebras with $\dim(\D(A))=3$ and $\l$, $\delta$, $\mu$, $\nu$, $\xi$ and $\gamma$ nonzero. \\ 
\centering The  acting group is $\KN$.}\label{l3}
\end{table}
\end{center}

\vspace*{-1cm}

\section{Simple and semisimple evolution algebras arising as the tensor product of two simple ones}

Let $E=(E^0,E^1,r_E,s_E)$ and $F=(F^0,E^1,r_F,s_F)$ be two directed graphs. We recall that the \textit{categorical product} of $E$ and $F$ is the directed graph defined by $E \times F := (E^0 \times F^0, E^1 \times F^1, r, s)$ where $s(f,g) =(s(f), s(g))$ and $r(f,g)=(r(f), r(g))$ for any $(f,g)\in E^1 \times F^1$.

We know, by \cite{CMMT}, that  the tensor product of two evolution algebras is an evolution algebra as well. Moreover, if $E=(E^0,E^1,r_E,s_E)$ and $F=(F^0,
F^1,r_F,s_F)$  are the directed graphs associated to the evolution algebras $A_1$ and $A_2$ respectively, then the directed graph associated to $A_1 \otimes A_2$ is the graph $E \times F$.

Following \cite{McANDREW}, let $E=(E^0,E^1,r_E,s_E)$ be a directed graph and $u, v\in E^0$. We say $u, v$ are \textit{strongly connected} if there exists a path from $u$ to $v$ and from $v$ to $u$ or $u=v$. This is an equivalence relation and we define a \textit{component} $C$ of $E$ to be a subgraph whose vertices, $C^0$, are the vertices in an equivalence class with respect to this relation and whose edges are those  $f \in E^1$ with $s(f), r(f) \in C^0$.

One can find, in \cite{McANDREW}, the following result that concerns strongly connected directed graphs (involving the categorical product of graphs).

\begin{theorem}[\cite{McANDREW}, Theorem 1.(ii)]\label{thm:strong}
Let $G_1$ and $G_2$ be strongly connected directed graphs. Let $$d_1=d(G_1)=gcd\{\text{\rm length of all the closed paths in } G_1\},$$
$$d_2=d(G_2)=gcd\{\text{\rm length of all the closed paths in } G_2\}$$ and
$$d_3=gcd\{d_1,d_2\}.$$
Then the number of components of $G_1\times G_2$ is $d_3$.
\end{theorem}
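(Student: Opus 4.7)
\medskip

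\noindent\textbf{Proof proposal.} The plan is to translate the product-graph question into a numerical one about path lengths, and then apply a Chinese-Remainder argument. A path of length $n$ in $G_1\times G_2$ from $(u_1,u_2)$ to $(v_1,v_2)$ is, by the definition of the categorical product, exactly a pair of paths of the common length $n$, one from $u_i$ to $v_i$ in $G_i$ for $i=1,2$. Hence, writing $L_i(u,v)$ for the set of lengths of paths from $u$ to $v$ in $G_i$, the vertex $(u_1,u_2)$ reaches $(v_1,v_2)$ in $G_1\times G_2$ if and only if $L_1(u_1,v_1)\cap L_2(u_2,v_2)\neq\emptyset$. The strongly connected components of the product are therefore the equivalence classes of the relation ``mutually reachable'' read off these length sets.

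The key lemma I would establish (or quote, as it is classical) is the \emph{cyclic decomposition} of a strongly connected graph $G$ with period $d=d(G)$: the vertex set admits a partition $V_0,\ldots,V_{d-1}$ such that every edge goes from some $V_i$ to $V_{i+1\pmod d}$, and for $u\in V_i$, $v\in V_j$ one has $L(u,v)\subset (j-i)+d\Z$. To prove this, fix $v_0\in G^0$, set $V_i=\{v\in G^0:\exists\,\mu\in\mathrm{Path}(G),\;s(\mu)=v_0,\;r(\mu)=v,\;|\mu|\equiv i\pmod d\}$, and use that the $\gcd$ of closed-path lengths is $d$ to show this is well defined. The crucial strengthening is that this inclusion is \emph{asymptotically an equality}: by strong connectedness, closed paths through any fixed vertex $u$ generate a numerical semigroup with $\gcd$ equal to $d$, so by the numerical semigroup / Chicken McNugget argument, all sufficiently large multiples of $d$ are closed-path lengths at $u$. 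Concatenating any short path from $u$ to $v$ with such closed loops then gives paths of every sufficiently large length in $(j-i)+d\Z$.

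Applying this to $G_1$ and $G_2$, say with cyclic decompositions $\{V_a^{(1)}\}_{a\in\Z/d_1}$ and $\{V_b^{(2)}\}_{b\in\Z/d_2}$, for any choice of $(u_k,v_k)$ in $V_{i_k}^{(k)}\times V_{j_k}^{(k)}$ one has
\[
L_1(u_1,v_1)\cap L_2(u_2,v_2)\neq\emptyset
\;\Longleftrightarrow\;
\exists\,n\in\N\colon n\equiv j_1-i_1\!\!\!\pmod{d_1},\ n\equiv j_2-i_2\!\!\!\pmod{d_2},
\]
and by the Chinese Remainder Theorem this system is solvable precisely when $j_1-i_1\equiv j_2-i_2\pmod{d_3}$, i.e.\ $i_1-j_1\equiv i_2-j_2\pmod{d_3}$. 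The same criterion (with the roles of source and range swapped) governs the reverse reachability, so the two directions coincide: $(u_1,u_2)$ and $(v_1,v_2)$ lie in the same strongly connected component of $G_1\times G_2$ if and only if $(i_1-i_2)-(j_1-j_2)\equiv 0\pmod{d_3}$.

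Therefore the map $(u_1,u_2)\mapsto (i_1-i_2)\bmod d_3$ induces a bijection between the strongly connected components of $G_1\times G_2$ and $\Z/d_3\Z$, yielding exactly $d_3$ components, as claimed. The main obstacle in the argument is the numerical-semigroup step that upgrades the containment $L(u,v)\subset (j-i)+d\Z$ to an equality in the cofinite sense; everything else is bookkeeping via the cyclic decomposition and CRT. A secondary, purely formal point to verify carefully is that every class $c\in\Z/d_3$ is actually realized (nonempty), which follows because each $V_a^{(1)}$ and $V_b^{(2)}$ is nonempty, so all pairs $(a,b)\in\Z/d_1\times\Z/d_2$, hence all values of $a-b\bmod d_3$, occur.
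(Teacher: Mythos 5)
The paper does not prove this statement at all: it is imported verbatim from McAndrew (Theorem 1(ii) of the cited reference), so there is no internal proof to compare against. Judged on its own, your sketch is correct and follows the classical line of argument, which is essentially the one in McAndrew's original paper: the cyclic (periodic) decomposition $V_0,\ldots,V_{d-1}$ of a strongly connected digraph of period $d$, the numerical-semigroup upgrade showing that $L(u,v)$ contains \emph{all} sufficiently large integers congruent to $j-i$ modulo $d$, and then the generalized Chinese Remainder criterion that the system $n\equiv j_1-i_1 \pmod{d_1}$, $n\equiv j_2-i_2 \pmod{d_2}$ is solvable iff $j_1-i_1\equiv j_2-i_2 \pmod{\gcd(d_1,d_2)}$. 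You correctly identify the one step that genuinely needs work (the cofinite realization of lengths, without which CRT only gives an integer solution rather than an actual path), and the observation that the forward and reverse reachability criteria coincide, so that one-way reachability already yields mutual reachability, is the right way to see that the invariant $(i_1-i_2)\bmod d_3$ exactly labels the components. Two small points worth making explicit if you wrote this out in full: (a) the gcd of the lengths of closed paths \emph{through a fixed vertex} $u$ equals the global period $d$ (conjugate any closed path into one through $u$ via strong connectedness), which is what makes the numerical semigroup at $u$ have gcd $d$; and (b) solutions of the CRT system form a full residue class modulo $\operatorname{lcm}(d_1,d_2)$ and hence contain arbitrarily large positive integers, which is what lets you invoke the cofinite realization. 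Neither is a gap, just bookkeeping you allude to but do not spell out.
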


\begin{remark}\label{remark:strongly}\rm
The previous theorem implies that if $E$ and $F$ are strongly connected graphs, then the connected components of $E\times F$ are strongly connected.
\end{remark}

So, we have a result that improves \cite[item (i) Corollary 4.3]{CMMT}.

\begin{theorem}\label{cor:strong}
Let $A_i$ ($i=1,2$) be finite-dimensional simple evolution algebras over a field $\K$ whose associated directed graphs are $E$ and $F$, respectively. We have:
\begin{enumerate}[\rm (i)]
    \item The evolution algebra $A_1\otimes A_2$ is simple if and only if $1=\gcd(d_1,d_2)$ where 
$d_1$ and $d_2$ are as in Theorem \ref{thm:strong}. In particular, if $E$ and $F$ have closed paths of coprime length, then $A_1\otimes A_2$ is simple. 
\item The evolution algebra $A_1\otimes A_2$ is semisimple (a direct sum of simple evolution algebras).
\end{enumerate}

\end{theorem}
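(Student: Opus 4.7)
The plan is to reduce both statements to the McAndrew theorem via the simplicity criterion stated in Proposition 2.7 of \cite{CKS1} (recalled before Remark \ref{yaemp}). First I would observe that a finite-dimensional simple evolution algebra is automatically perfect: for a simple algebra $A^2$ is a nonzero ideal, so $A^2=A$, which for evolution algebras amounts to the structure matrix having full rank. Hence both $E$ and $F$ are strongly connected. The graph of $A_1\otimes A_2$ is the categorical product $E\times F$ (by the result of \cite{CMMT} recalled above), and $A_1\otimes A_2$ is again perfect because its structure matrix is the Kronecker product of two nonsingular matrices, which is nonsingular.

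For part (i), I would apply Theorem \ref{thm:strong} together with Remark \ref{remark:strongly}: the graph $E\times F$ splits into exactly $d_3=\gcd(d_1,d_2)$ strongly connected components. Therefore $E\times F$ is itself strongly connected if and only if $d_3=1$. Since $A_1\otimes A_2$ is perfect, Proposition 2.7 of \cite{CKS1} gives that $A_1\otimes A_2$ is simple precisely in that case. The \lq\lq in particular\rq\rq\ clause is immediate, because if $E$ and $F$ contain closed paths of coprime lengths $n_1$ and $n_2$, then $d_1\mid n_1$, $d_2\mid n_2$, and so $\gcd(d_1,d_2)\mid\gcd(n_1,n_2)=1$.

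For part (ii), let $C_1,\dots,C_r$ be the strongly connected components of $E\times F$, where $r=\gcd(d_1,d_2)$. By Remark \ref{remark:strongly} each $C_j$ is strongly connected, and the vertex set $(E\times F)^0$ is the disjoint union of the $C_j^0$. Since $T(C_j^0)=C_j^0$, identity \eqref{first} gives that
\[
I_j:=\bigoplus_{v\in C_j^0}\K\,e_v
\]
is an ideal of $A_1\otimes A_2$, and by disjointness $A_1\otimes A_2=\bigoplus_{j=1}^r I_j$. Reordering the natural basis so that vertices of each component are consecutive, the structure matrix of $A_1\otimes A_2$ becomes block-diagonal with blocks equal to the structure matrices of the $I_j$; since the whole matrix is nonsingular, each block is nonsingular, so every $I_j$ is itself a perfect evolution algebra. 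Its associated graph is $C_j$, which is strongly connected, so $I_j$ is simple by Proposition 2.7 of \cite{CKS1}. Thus $A_1\otimes A_2$ is a direct sum of simple evolution algebras.

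The only delicate point is the perfectness of the restricted ideals $I_j$; but the block-diagonal observation makes this automatic. Everything else is a direct reading of the tools already available in the paper.
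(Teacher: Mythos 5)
Your proof is correct and follows essentially the same route as the paper's: perfectness of $A_1\otimes A_2$, McAndrew's count of strongly connected components of $E\times F$, and the criterion that a perfect evolution algebra is simple if and only if its graph is strongly connected. You in fact supply more detail than the paper, which cites \cite{CMMT} for perfectness where you argue directly from $A^2=A$, and which dismisses part (ii) as straightforward where you spell out the block-diagonal argument showing each component ideal is itself perfect.
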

\begin{proof}
For item (i), if $A_1\otimes A_2$ is simple, then the graph $E\times F$ has one component hence $\gcd(d_1,d_2)=d_3=1$. Conversely, if $1=d_3=\gcd(d_1,d_2)$ then $E\times F$ has one  component which is $(E\times F)^0$. Recall that any component is strongly connected, moreover, the algebra $A_1\otimes A_2$ is perfect since both $A_1$ and $A_2$ are perfect (being simple) by \cite[item (i) Proposition 2.14]{CMMT}. So, the graph $E\times F$ is strongly connected and $A_1\otimes A_2$ is perfect. Consequently, $A_1\otimes A_2$ is simple by \cite[Proposition 2.7]{CKS1}.
The second assertion is straightforward. For item (ii), if $d_3\geq 1$ each component gives rise to a simple evolution algebra.
\end{proof}

\begin{corollary}\label{patatas}
Let $A_1$ and $A_2$ be two finite-dimensional simple evolution algebras over a field $\K$. Suppose 
$\dim(\D({A_i})) > 0$ for some $i \in \{1,2\}$, then $A_1 \otimes A_2$ is a simple evolution algebra.
\end{corollary}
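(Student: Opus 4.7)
The plan is to reduce the statement directly to Theorem \ref{cor:strong}(i) by showing that the hypothesis $\dim(\D(A_i))>0$ forces the graph invariant $d_i$ to equal $1$. Recall from the discussion preceding the notation $\D(A)$ that $\dim(\D(A))=l(A)$, the number of loops (edges $e$ with $s(e)=r(e)$) in the directed graph $E_i$ associated with a natural basis of $A_i$. So if $\dim(\D(A_i))>0$ for some $i$, then $E_i$ contains at least one loop, i.e., a closed path of length $1$.

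Since $d_i=\gcd\{\text{lengths of closed paths in } E_i\}$ and one of those lengths is $1$, we conclude $d_i=1$. Then $d_3=\gcd(d_1,d_2)$ divides $d_i=1$, so $d_3=1$.

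Applying Theorem \ref{cor:strong}(i) with this computation yields immediately that $A_1\otimes A_2$ is a simple evolution algebra, which is what we wanted. The argument is essentially a one-line reduction, so there is no serious obstacle; the only care needed is to justify that $\dim(\D(A_i))=l(A_i)$ is well defined (which was observed in the paragraph introducing $\D(A)$, using the fact that simple evolution algebras are perfect by \cite[Proposition 2.14]{CMMT} and hence admit a unique natural basis in the sense of \cite[Definition 2.1]{boudi20}, so that $l(A_i)$ is a graph invariant).
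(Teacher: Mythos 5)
Your proof is correct and follows essentially the same route as the paper: the hypothesis $\dim(\D(A_i))>0$ gives a loop in the corresponding graph, hence $d_i=1$, hence $d_3=\gcd(d_1,d_2)=1$, and Theorem \ref{cor:strong}(i) applies. Your added justification that $l(A_i)$ is a well-defined invariant (via perfectness of simple evolution algebras) is a welcome, if minor, extra detail over the paper's terser argument.
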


\begin{proof}
Since $\dim(\D({A_i})) > 0$, for some $i \in \{1,2\}$, we get that $A_1$ or $A_2$ has at least one loop.   This means that $d_1$ or $d_2$ is $1$. The result is straightforward.   
\end{proof}

The next issue is how to construct simple tensorially decomposable evolution algebras. 

\begin{remark}\label{nuevas}\rm

Note that, item (i) Theorem \ref{cor:strong} and  Corollary \ref{patatas} allow us to construct more complex models of simple evolution algebras. For instance, taking as model the
simple algebra $\tres^{1,4}_\l$, whose structure matrix is
$$\tiny\begin{pmatrix}1 & 0 &\l\\ 1 & 0 & 0\\ 0 & 1 & 0\end{pmatrix},$$ and replacing each $1$ in the matrix with an $n\times n$ structure matrix $M$ of another simple evolution algebra (and replacing each $\l$ with $\l M$), we  get the structure matrix $\tiny\begin{pmatrix}M & 0 &\l M\\ M & 0 & 0\\ 0 & M & 0\end{pmatrix}$, which represents a $3n$-dimensional simple evolution algebras. So, we have many models of $3n$ and $2n$-dimensional simple algebras from our classification theorems. 

Although, observe that not every simple tensorially decomposable evolution algebra requires 
the factors to have all closed paths of coprime length. We illustrate this fact.
Let $A_1$ and $A_2$ be two simple finite dimensional  evolution algebras over a field $\K$ with associated directed graphs $E$ and $F$ as follows:

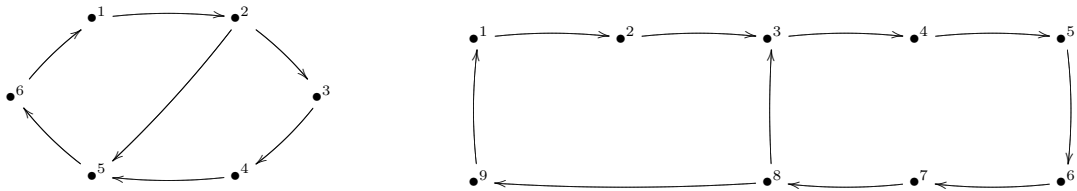
\begin{figure}[h]
\centering
      \begin{subfigure}[h]{0.45\textwidth}
      \centering
   $\resizebox{.6\textwidth}{!}{\xymatrix{ & \bullet^{1}\ar@/^.2pc/[rr] &  & \bullet^{2}\ar@/^.2pc/[dr]  \ar@/^.2pc/[ddll] &\cr
           \bullet^{6} \ar@/^.2pc/[ur]  & &  & &\bullet^{3} \ar@/^.2pc/[dl]\cr
          & \bullet^{5} \ar@/^.2pc/[ul] &  &  \bullet^{4} \ar@/^.2pc/[ll] & } } $
       \caption{\footnotesize Directed graph $E$ associated to the evolution algebra $A_1$.}
          \end{subfigure}
 \begin{subfigure}[h]{0.5\textwidth}
 \centering
        $ \resizebox{0.99\textwidth}{!}{\xymatrix{\bullet^{1} \ar@/^.2pc/[rr] & & \bullet^{2}\ar@/^.2pc/[rr] & & \bullet^{3}\ar@/^.2pc/[rr] & & \bullet^{4}\ar@/^.2pc/[rr] &  & \bullet^{5}\ar@/^.2pc/[dd]  \cr
&&&&&&&& \cr
          \bullet^{9} \ar@/^.2pc/[uu]& & & &  \bullet^{8} \ar@/^.1pc/[uu]\ar@/^.2pc/[llll]  & & \bullet^{7}\ar@/^.2pc/[ll]  & & \bullet^{6} \ar@/^.2pc/[ll] \cr}}$
              \caption{\footnotesize Directed graph $F$ associated to the evolution algebra $A_2$.}
         \end{subfigure}
\end{figure}

In the same vein of Theorem \ref{thm:strong}, $d_1=gcd\{4,6\}=2$ and $d_2=gcd\{6,9\}=3$. Hence $d_3=gcd\{d_1,d_2\}=1$. So, $E\times F$ has one strongly connected component. Hence, $A_1\otimes A_2$ is simple.

 $$\includegraphics[width=8cm]{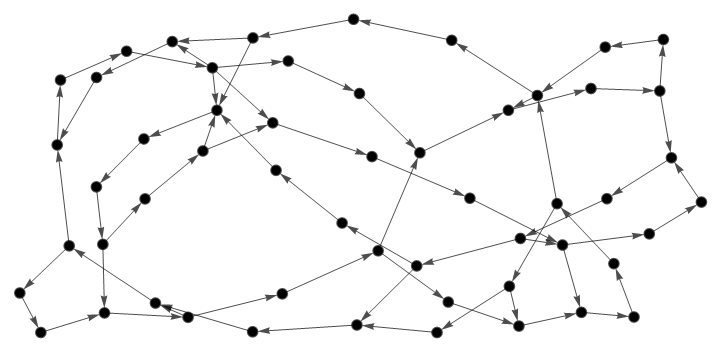}$$
 \begin{center}
    \footnotesize {\rm (3)} Directed graph $E\times F$ associated to the evolution algebra $A_1\otimes A_2.$
 \end{center}

\end{remark}
Recall the definitions of tensorially decomposable and tensorially indecomposable $\K$-algebras.
\begin{definition}[\cite{CMMT}, Definition 2.9] \rm
 We say that a $\K$-algebra $A$ is \textit{tensorially decomposable} if it is isomorphic to $A_1\otimes A_2$ where $A_1$ and $A_2$ are $\K$-algebras with $\dim(A_1),\dim(A_2)> 1$. Otherwise we say that $A$ is \textit{tensorially indecomposable}.
\end{definition}

One of the motivations of the following theorem is to  decide if we could construct simple evolution algebras arising as quotients of tensor products of other evolution algebras. Here we prove that if the ideal realizing the quotient is $1$-dimensional, then the quotient algebra is not simple.

\begin{theorem}\label{cociente}
Let $A$ be a finite dimensional evolution algebra over a field $\K$ with natural basis $B$.
\begin{enumerate}[\rm (i)]
    \item     If $I\triangleleft A$ is an  ideal with $I=\K u$ and $\Supp_B{(u)}> 1$, then $A/I$ is not simple. 
    \item If $A= A_1 \otimes A_2$ is  tensorially decomposable and $0\ne I\triangleleft A$ is an  ideal with $I=\K u$, then $A/ I$ is not simple.
   \end{enumerate}
    
\end{theorem}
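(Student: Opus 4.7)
The plan is to prove the two parts in turn, with part (ii) reducing to part (i) in most cases.

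For part (i), I would write $u=\sum_{i\in S}\a_i e_i$ with $S=\Supp_B(u)$ and $|S|\ge 2$. Since $u\,e_j=\a_j e_j^{\,2}$ must lie in $I=\K u$, for each $j\in S$ there exists $\l_j\in\K$ with $e_j^{\,2}=\l_j u$. Fix any $j_0\in S$ and consider the class $\bar e_{j_0}$ in $A/I$. It is nonzero, for otherwise $e_{j_0}\in\K u$ would force $\Supp_B(u)=\{j_0\}$, contradicting $|S|\ge 2$. Moreover $\bar e_{j_0}^{\,2}=\l_{j_0}\bar u=0$ and $\bar e_{j_0}\bar e_k=0$ for $k\ne j_0$ (since $B$ is a natural basis), so $\K\bar e_{j_0}$ is a one-dimensional nilpotent ideal of $A/I$. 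If $\dim(A/I)\ge 2$ this is a proper nontrivial ideal; if $\dim(A/I)=1$ then $A/I=\K\bar e_{j_0}$ has zero square. In both cases $A/I$ fails to be simple.

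For part (ii), I would fix natural bases $\{e_i\}$ of $A_1$ and $\{f_j\}$ of $A_2$, so that $B=\{e_i\otimes f_j\}$ is a natural basis of $A=A_1\otimes A_2$. If $|\Supp_B(u)|>1$, apply (i). Otherwise $u$ is proportional to a single tensor $e_{i_0}\otimes f_{j_0}$, and the ideal condition collapses to $e_{i_0}^{\,2}\otimes f_{j_0}^{\,2}\in\K(e_{i_0}\otimes f_{j_0})$. Expanding $e_{i_0}^{\,2}=\sum_k\o_k e_k$ and $f_{j_0}^{\,2}=\sum_l\s_l f_l$ and inspecting coefficients forces at least one of: $e_{i_0}^{\,2}=0$, $f_{j_0}^{\,2}=0$, or simultaneously $e_{i_0}^{\,2}\in\K e_{i_0}$ and $f_{j_0}^{\,2}\in\K f_{j_0}$. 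In any event, $\K e_{i_0}\triangleleft A_1$ or $\K f_{j_0}\triangleleft A_2$ (because in the natural basis $e_{i_0}\cdot e_k=0$ for $k\ne i_0$). The induced ideal $J:=\K e_{i_0}\otimes A_2$ (or symmetrically $A_1\otimes\K f_{j_0}$) of $A$ then satisfies $I\subsetneq J\subsetneq A$ thanks to the hypothesis $\dim A_1,\dim A_2>1$, so $J/I$ is a proper nontrivial ideal of $A/I$ and $A/I$ is not simple.

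The main obstacle is the $|\Supp_B(u)|=1$ case of part (ii): from the ideal condition on a single tensor one must extract a one-dimensional ideal in one of the tensor factors, and then the dimensional hypothesis $\dim A_i>1$ is what guarantees the strict inclusions $I\subsetneq J\subsetneq A$. A minor subtlety to verify along the way is that the tensor decomposition of $A$ can be chosen with $A_1,A_2$ evolution algebras so that the tensor basis is natural; this is where the results of \cite{CMMT} enter implicitly.
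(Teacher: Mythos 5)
Your proposal is correct, and part (ii) follows essentially the same route as the paper: reduce to part (i) when the support has more than one element, and otherwise extract a one-dimensional ideal ($\K e_{i_0}\triangleleft A_1$ or $\K f_{j_0}\triangleleft A_2$) from the condition $e_{i_0}^2\otimes f_{j_0}^2\in\K(e_{i_0}\otimes f_{j_0})$, then inflate it to an intermediate ideal $J$ with $I\subsetneq J\subsetneq A$ using $\dim A_1,\dim A_2>1$; your trichotomy is just a cleaner organization of the paper's case split $u^2=u$ versus $u^2=0$. Where you genuinely diverge is part (i). The paper argues globally: from $\l_1,\l_2\ne 0$ it gets $e_1^2,e_2^2\in I$, hence $A^2\subseteq I+\sum_{h\ge 3}\K e_h^2$, and a dimension count gives $\dim\bigl[(A^2+I)/I\bigr]\le n-2<\dim(A/I)$, so the ideal $(A^2+I)/I$ of $A/I$ is proper (and if it happens to vanish, $(A/I)^2=0$, which also precludes simplicity). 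You instead produce a local witness: for $j_0$ in the support, $e_{j_0}^2=\l_{j_0}u\in I$, so $\bar e_{j_0}$ is a nonzero class annihilating all of $A/I$, and $\K\bar e_{j_0}$ is a one-dimensional ideal (proper when $\dim(A/I)\ge 2$, and forcing $(A/I)^2=0$ when $\dim(A/I)=1$). Your version is more explicit and pins down a concrete degenerate element of $A/I$, while the paper's dimension count is shorter and produces the canonical ideal $(A/I)^2$; both are complete, and your careful handling of the $\dim(A/I)=1$ edge case covers a point the paper leaves implicit.
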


\begin{proof}
For (i) write $u=\sum_h \l_h e_h$ where $B=\{e_h\}_{h=1}^n$ is a natural basis of $A$. We may assume that $\l_1,\l_2\ne 0$.
Then $I\ni ue_i$ for $i=1,2$ so
$e_1^2,\ e_2^2\in I$ thus 
$A^2\subset I+\sum_{h=3}^n \K e_h^2$
and $\dim(A^2)\le n-1$. Since $\dim[(A^2+I)/I]=\dim(A^2/A^2\cap I)\le n-2$ we have a nonzero proper ideal
$(A^2+I)/I\triangleleft A/I$.

For (ii), suppose that $B_1=\{a_i\}_{i=1}^n$, $B_2=\{b_j\}_{j=1}^q$ are natural bases of $A_1$ and $A_2$ respectively, and $B_1\otimes B_2=\{e_h\}_{h=1}^{n q}$ a natural basis of $A=A_1\otimes A_2$. So, $u=\sum_{h=1}^n \lambda_h e_h$. %
If $\Supp_B{(u)}>1$ we apply item (i) and if  $\Supp_B{(u)}=1$ then $u=\lambda a_1\otimes b_1$, $\lambda\neq 0$ (so we may assume without loss of generality $\l=1$).
Since $I=\K u$, we have either $u^2=0$ or $u^2\in \K^{\times} u$. In the last case, we can take $u$ being idempotent (re-scaling if necessary). 
\begin{enumerate}
\item In the case $u^2=u$ we have $ a_1\otimes b_1=u= u^2=a_1^2\otimes b_1^2=\sum_{i=1}^n\omega_{i1}a_i\otimes \sum_{j=1}^q\sigma_{j1}b_j=\sum_{i,j}\omega_{i1}\sigma_{j1}a_i\otimes b_j$.
So $\omega_{11}\sigma_{11}\ne 0$ and
$\omega_{i1}\sigma_{j1}=0$ if $i\neq 1$ and $j\neq 1$. Now, if we suppose $\omega_{i1}\neq 0$ for $i\neq 1$, then $\sigma_{j1}=0$ for all $j\neq 1$. Hence $b_1^2=\sigma_{11}b_1$. 
Thus $A_2$ has a $1$-dimensional ideal $J=\K b_1$ and therefore $A$ has an ideal $A_1\otimes J$ with $\dim(A_1\otimes J)=\dim(A_1)$. 
Then $A/I$ has the ideal $(A_1\otimes J+I)/I$ which is nonzero and proper. 
\item If $u^2=0$ we deduce, as before, $\omega_{i1}\sigma_{j1}=0$
for any $i$ and $j$. Then, for instance if $\omega_{i1}\ne 0$, we have
$\sigma_{j1}=0$ for every $j$, so $A_2$ has a $1$-dimensional ideal and we conclude as above.
\end{enumerate}
\end{proof}

\section*{Acknowledgments}
 The  four   authors are supported by the Junta de Andaluc\'{\i}a  through projects UMA18-FEDERJA-119  and FQM-336 and  by the Spanish Ministerio de Ciencia e Innovaci\'on   through project  PID2019-104236GB-I00,  all of them with FEDER funds.

{\bf Data Availability Statement:}  The authors confirm that the data supporting the findings of this study are available within the article.


\begin{thebibliography}{99}


\bibitem{boudi20} 
Nadia Boudi, Yolanda Cabrera Casado and Mercedes Siles Molina. {Natural families in evolution algebras.} \textit{Publicacions Matemàtiques.}   \textbf{66} (1) (2022).

\bibitem{YC} Yolanda Cabrera Casado.  {Evolution algebras}. \textit{ Doctoral dissertation. Universidad de M\'alaga (2016)}. http://hdl.handle.net/10630/14175.

\bibitem{CCGMM} Yolanda Cabrera Casado, Maria Inez Cardoso Gon\c calves, Daniel Gon\c calvez, Dolores Mart\'in Barquero and C\'andido Mart\'in Gonz\'alez. {Chains in evolution algebras.}  \textit{Linear Algebra Appl.} \textbf{622} (2021) 104-149.

\bibitem{CKS1} Yolanda Cabrera Casado, M\"uge Kanuni and Mercedes Siles Molina.  {Basic ideals in evolution algebras}. \textit{Linear Algebra Appl.} \textbf{570} (2019)  148-180.

\bibitem{CMMT} Yolanda Cabrera Casado, Dolores Martín Barquero, Cándido Martín González and Alicia Tocino. {Tensor product of evolution algebras}. In Press. \textit{Mediterranean Journal of Mathematics}. ArXiv:2111.06114  (2021).

\bibitem{YE} Yolanda Cabrera Casado and Elkin Quintero Vanegas. {Absorption Radical of an evolution algebra}. {Preprint.}


\bibitem{CSV2} Yolanda Cabrera Casado, Mercedes Siles Molina and M. Victoria Velasco. Classification of three dimensional evolution algebras.  \textit{Linear Algebra Appl.} \textbf{524} (2017) 68-108.


\bibitem{ceballos} Manuel Ceballos González, Raúl M. Falcon Ganfornina, Juan Nuñez Valdes and  Ángel F. Tenorio Villalón. {A historical perspective of Tian's evolution algebras}.  In Press. \textit{Expositiones Mathematicae.} (2021).
\url{https://doi.org/10.1016/j.exmath.2021.11.004}

\bibitem{BMV} M. Eugenia Celorrio and M. Victoria Velasco. {Classifying Evolution Algebras of Dimensions Two and Three.} \textit{Mathematics.}  \textbf{7} (12) (2019).



\bibitem{EL1} Alberto Elduque and Alicia Labra. {Evolution algebras and graphs}. \textit{J. Algebra Appl.}  \textbf{14} (7) (2015), 1550103, 10 pp.

\bibitem{IM} A.N. Imomkulov. {Classification of a family of three dimensional real evolution algebras.} \textit{TWMS J. Pure Appl. Math.} \textbf{10} (2) (2019) 225-238.



\bibitem{McANDREW} M. H. Mc Andrew. On the product of directed graphs. \textit{Proceedings of the American Mathematical Society}
Vol. 14, No. 4 (Aug., 1963), 600-606.

\end{thebibliography}
\end{document}